\theoremstyle{plain} % definition 
\newtheorem{lemma}[equation]{Lemma} 
\newtheorem{proposition}[equation]{Proposition} 
\newtheorem{theorem}[equation]{Theorem} 
\newtheorem{conjecture}[equation]{Conjecture}
\theoremstyle{definition}
\newtheorem{definition}[equation]{Definition} 
\theoremstyle{remark}
\newtheorem{remark}[equation]{Remark}
\newtheorem*{ack}{Acknowledgment}
\numberwithin{equation}{section}
\title {An $ A_p$--$A_ \infty $ inequality for the Hilbert Transform}
\author{Michael T. Lacey}   %  can use \and  
\address{ School of Mathematics, Georgia Institute of Technology, Atlanta GA 30332, USA}
\email {lacey@math.gatech.edu}
\thanks{Research supported in part by grant NSF-DMS 0968499.}
\begin{document}
\begin{abstract}
We prove in particular that for the Hilbert transform,  for $ 1<p< \infty $ and a weight $ w\in A_p$, that we have the inequality 
\begin{equation*}
\lVert H \rVert_{ L ^{p} (w) \to L ^{p} (w)} 
\lesssim \lVert w\rVert_{A _p } ^{1/p } 
\max \bigl\{ 
\|  w \|_{ A _{\infty}}^{1/p'}, \lVert w ^{- p'+1} \rVert_{A _{\infty } }  ^{1/p}  \bigr\} 
\end{equation*}
The case of $ p=2$ is an instance of a recent result of Hyt\"onen-Perez, and 
as a corollary we obtain the well-known bound of S. Petermichl of $ \lVert w\rVert_{A_p} ^{ \max \{1, (p-1) ^{-1} \}}$. 
This supports a conjectural inequality valid for all Calder\'on-Zygmund operators $ T$, and $ p\neq 2$. 
\end{abstract}
\maketitle 

%%%%%%%%%%%%%%%%%%%%%%%%%%%%%% SECTION  SECTION SECTION
%%%%%%%%%%%%%%%%%%%%%%%%%%%%%% SECTION  SECTION SECTION 
\section{Introduction: Main Theorem} %\label{s.}

We are interested in estimates for the norms of Calder\'on-Zygmund operators on weighted $ L ^{p}$-spaces, 
a question that has attracted significant interest recently;  definitive estimates 
of this type have been obtained in  \cites{1007.4330,1103.5229}, among others.   Our particular motivation here is the 
paper of Lerner \cite{MR2721744}, and   Hyt\"onen-Perez \cite{1103.5562} that focus on a quantification of 
good estimates of the norm of an operator in terms of the $ A_p$ and $ A _{\infty }$ characteristics of a weight.

%%%%%%%%%%%%%%%%%%%%%%%%%%%%%%  DEFINITION DEFINITION DEFINITION
\begin{definition}\label{d.Ap} Let  $ w$ be a weight on $ \mathbb R ^{d}$  with density also written as $ w$.  Assume $ w>0$ a.\thinspace e., 
and $ 1<p< \infty $. We define  $ \sigma= w ^{1-p'} = w ^{\frac 1 {1-p}}$, which is defined a.\thinspace e.\thinspace, and set 
\begin{equation} \label{e.Ap}
\lVert w\rVert_{A_p} := \sup _{Q} \frac {w (Q)} {\lvert  Q\rvert } \Bigl[ \frac {\sigma (Q)} {\lvert  Q\rvert } \Bigr] ^{p-1} \,. 
\end{equation}
 For the  endpoint  $ p= \infty $,  we set 
\begin{align}\label{e.AzI}
\lVert w\rVert_{A_ \infty }& :=  \sup _{Q} w (Q) ^{-1} \int _{Q} M (w Q) \; dx \,. 
\end{align}
\end{definition}
%%%%%%%%%%%%%%%%%%%%%%%%%%%%%%  DEFINITION DEFINITION DEFINITION
	
Note that the definition of \eqref{e.AzI} is different from more familiar definition of an $ A _{\infty }$ norm \cite{MR727244}. 
It originates in \cite{MR883661}, and the article of \cite{1103.5562} makes a convincing case for it's central 
role in the subject. It is known that $ \lVert w\rVert_{A _{\infty }} \le c_d \lVert w\rVert_{A_p}$ for $ 1< p < \infty $. 
The article  \cite{1103.5562}  proves a very sharp estimate on the $ L ^2 (w)$-norm of Calder\'on-Zygmund operators 
in terms of the $ A_2$ and $ A _{\infty }$ characteristics of the weight.  \emph{The main purpose of this article is to extend this 
result to the case of $ p\neq2$ for a few canonical Calder\'on-Zygmund operators.}  This result, stated just below, suggests a 
clear conjecture, which we return to in the concluding section of this paper.

Let us say that $ T$  is a classical Calder\'on-Zygmund operator if it is of the form $ T f (x) = \textup{p.v.} \int f (y) K (x-y)\; dy$ where 
(1) $ K (y)= y ^{-1} $, namely the Hilbert transform; (2) $ K (z) = z ^{-2} $, for complex $ z$, namely the Beurling operator; 
(3) powers of the Beurling operator; 
(4) $ K (y) = y \lvert y\rvert ^{-d-1}$, in dimension $ d\ge 2$, namely the Riesz transform; and lastly (4) $ K (y)$ is any odd, one-dimensional  
$ C^2$ Calder\'on-Zygmund kernel.  
By the latter, we mean that $ \lvert \partial ^{\epsilon } K (y)\rvert \lesssim \lvert  y\rvert ^{-1- \epsilon }  $ for $ \epsilon =0,1,2$. 
These operators are distinguished in that they are known to be in the convex hull 
of a Haar shift of \emph{bounded} complexity.  (See the next section for a definition.)

We define the maximal truncations of $ T$ by 
\begin{equation*}
T f (x) := \sup _{\epsilon < \delta } 
 \Biggl\lvert  \int _{\epsilon < \lvert  x-y\rvert < \delta  } f (y) K (x-y)\; dy  \Biggr\rvert\,. 
\end{equation*}
This result is new for $ p\neq 2$ with or without truncations; and in the case of $ p=2$, it is new for the maximal truncations. 

%%%%%%%%%%%%%%%%%%%%%%%%%%%%%% THEOREM THEOREM THEOREM
\begin{theorem}\label{t.classical} 
Let  $  T $ be a classical singular integral in the sense just defined, and $ 1<p< \infty $, and $ w\in A_p$.   It then holds that 
\begin{align}
\lVert  T_{\natural } f  \rVert _{ L ^{p} (w)} &\le C_{T,p} \|  w \|_{ A _{p}}^{1/p} 
\max \bigl\{ 
\|  w \|_{ A _{\infty}}^{1/p'}, \lVert w ^{- p'+1} \rVert_{A _{\infty } }  ^{1/p}  \bigr\} 
\lVert  f \rVert _{L ^{p} (w)}.
\end{align} 
\end{theorem}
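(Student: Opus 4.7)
The plan is to reduce the estimate to a uniform bound on dyadic Haar shift operators of bounded complexity, and then to prove that bound by a two-weight corona/sparse argument that generalizes the Hytönen--Perez approach from $p=2$ to arbitrary $1<p<\infty$. First, each operator on the list is, by Petermichl's averaging over random dyadic grids (and its extensions to the Beurling, Riesz, and general odd $C^2$ kernels), an average of Haar shifts of bounded complexity, so it suffices to bound a single Haar shift $\mathcal S$ uniformly in the grid and complexity. The maximal truncations $T_\natural$ can be treated in parallel via a Cotlar-type pointwise estimate that dominates $T_\natural f$ by $\mathcal S f$ together with $Mf$, both of which satisfy the claimed inequality ($M$ trivially, by Buckley).

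Next, setting $\sigma = w^{1-p'}$ and writing $f = \sigma\varphi$, I would pass to the bilinear form $\Lambda(\varphi,g) = \langle \mathcal S(\sigma\varphi), gw\rangle$ for $\varphi \in L^p(\sigma)$ and $g \in L^{p'}(w)$, and establish a Lerner-style sparse domination
$$|\Lambda(\varphi,g)| \lesssim \sum_{Q\in\mathcal F} \langle|\varphi|\rangle_Q^\sigma \, \langle|g|\rangle_Q^w \, \frac{\sigma(Q)\,w(Q)}{|Q|}$$
over a sparse collection $\mathcal F$, where $\langle\cdot\rangle_Q^\mu$ is the $\mu$-average on $Q$. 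The factor $\sigma(Q)w(Q)/|Q|$ is bounded by $\|w\|_{A_p}\cdot|Q|$, and this is precisely the source of the leading $\|w\|_{A_p}^{1/p}$ once the sparse sum is recombined by Hölder in the exponents $(p,p')$.

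The heart of the argument is the corona decomposition used to control the remaining sum. I would build a principal-cube family $\mathcal P \subset \mathcal F$ by a simultaneous stopping-time rule on $\langle\varphi\rangle_Q^\sigma$ (doubling relative to $\sigma$) and $\langle g\rangle_Q^w$ (doubling relative to $w$). The contribution from each corona factors through Carleson embedding: on the $\sigma$-side, $\|\sigma\|_{A_\infty}$ controls the corresponding Carleson constant and yields a factor $\|\sigma\|_{A_\infty}^{1/p}$ after applying Hölder with exponent $p$; symmetrically $\|w\|_{A_\infty}^{1/p'}$ arises on the $w$-side after Hölder with exponent $p'$.

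The main obstacle will be obtaining the \textbf{maximum} rather than the product of the two $A_\infty$ factors. When $p=2$ both Hölder exponents equal $1/2$ and the Hytönen--Perez corona argument distributes the two factors symmetrically, effectively producing a sum (hence a max). For $p\neq 2$ the exponents $1/p$ and $1/p'$ are unequal, so a direct symmetrization wastes one of the $A_\infty$ constants. The resolution I envisage is to split the sparse form cube-by-cube according to which of the two side-averages is ``heavier'' at the principal level, and to estimate each sub-sum using a single Carleson embedding against the dominant weight; the price paid on the other side is absorbed into the $A_p$ factor via $\|\sigma\|_{A_\infty} \lesssim \|w\|_{A_p}^{p'-1}$ and its dual. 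Making this dichotomy quantitative and verifying that the Haar shift structure (and not merely a positive dyadic sum) tolerates the asymmetric split is where the real work lies.
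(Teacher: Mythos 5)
Your reduction to Haar shifts of bounded complexity via averaging, your treatment of the maximal truncations by Cotlar-type domination by $Mf$ plus the shift, and your passage via Lerner's local oscillation inequality to a sparse positive bilinear form over a family $\mathcal F$ all mirror the paper's strategy; in particular your sparse form $\sum_{Q\in\mathcal F}\langle|\varphi|\rangle_Q^\sigma\langle|g|\rangle_Q^w\,\sigma(Q)w(Q)/|Q|$ is exactly the bilinear pairing $\langle T_{\mathcal L}(\sigma\varphi),gw\rangle$ for the paper's type-$L$ operator $T_{\mathcal L}f=\sum_{Q\in\mathcal L}\mathbf 1_Q\,\mathbb E_Qf$. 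Up to that point the approaches coincide.

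The gap is in the final step, and it is the step you yourself flag as ``the real work.'' Running the two Carleson embeddings with exponents $1/p$ and $1/p'$ produces the \emph{product} $\|\sigma\|_{A_\infty}^{1/p}\|w\|_{A_\infty}^{1/p'}$, which is larger than the max and does not give the theorem. Your proposed repair --- split cube-by-cube according to the heavier side and absorb the lighter $A_\infty$ factor via $\|\sigma\|_{A_\infty}\lesssim\|w\|_{A_p}^{p'-1}$ --- does not recover the claimed form: substituting that inequality into $\llbracket w,\sigma\rrbracket_{A_p}\|\sigma\|_{A_\infty}^{1/p}=\|w\|_{A_p}^{1/p}\|\sigma\|_{A_\infty}^{1/p}$ yields $\|w\|_{A_p}^{1/p+(p'-1)/p}=\|w\|_{A_p}^{1/(p-1)}$, i.e.\ you only reprove the pure $A_p$ bound of \cite{1103.5229}, not the mixed $A_p$--$A_\infty$ estimate. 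The missing idea is that, once the domination is by a \emph{positive} dyadic operator, one should not attack the bilinear form directly at all. Instead invoke the Sawyer-type two-weight characterization (Theorem~\ref{t.dyadicStrong}, from \cite{0911.3437}): the strong-type norm $\|T_{\mathcal L}(\sigma\cdot)\|_{L^p(\sigma)\to L^p(w)}$ is \emph{equivalent} to the maximum of the two local testing constants $\mathbf T_{p'}(w,\sigma)$ and $\mathbf T_p(\sigma,w)$. That equivalence is what produces the max structurally and for free; one then checks a single testing inequality of the form \eqref{e.2show}, and a single stopping-cube argument on the $w$ side (keyed to the exponential decay from the type-$L$ condition) yields one and only one factor of $\|w\|_{A_\infty}^{1/p'}$, with the dual testing condition supplying the $\|\sigma\|_{A_\infty}^{1/p}$ factor. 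Without this input your outline cannot close.
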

%%%%%%%%%%%%%%%%%%%%%%%%%%%%%% THEOREM THEOREM THEOREM

Since we have $ \lVert w\rVert_{A _{\infty }} \lesssim \lVert w\rVert_{A_p}$, 
the result above contains the sharp estimate $ \lVert  T_{\natural }   \rVert _{  L ^{p} (w)} \lesssim \lVert w\rVert_{A_p} ^{\max \{1, 1/(p-1) ^{-1} \}}$.  This last estimate holds in complete generality, and is a central estimate of \cite{1103.5229}.  
There is a weak-$L^p$ norm analog \eqref{e.zz}  of our main inequality   that is known in complete generality \cite{1103.5229}*{Section 12}.

A key step in the proof is the analogous result for dyadic models of the operators in question, see \S\ref{s.haar}.  
This is a step that has been critical 
in many  contributions, beginning with the breakthrough result of \cite{MR2354322}, and has taken an even more central role with the 
\emph{random} Haar shifts in \cite{1007.4330}.  An important component of this definition is a notion of \emph{complexity.}  

We apply to the Haar shift operators the remarkable \emph{Lerner median inequality} \cite{MR2721744}, see \S\ref{s.lerner}.  It is well-known 
that this inequality yields estimates that are exponential in complexity.  This means that the main theorem of this section only applies 
to operators which are in the convex hull of shifts of \emph{bounded complexity.} Fortunately, this is known to include 
the few canonical examples indicated above.  For the Hilbert transform see \cite{MR1756958}; Beurling \cite{MR1992955};  
powers of the Beurling \cite{MR2281449}; 
Riesz transforms \cite{MR1964822}; and for $ C ^2 $ odd one-dimensional kernels,  see \cite{MR2680056}.  It is an open problem to extend the main result of this last paper to higher dimensions.  

The Lerner median inequality has been applied in the setting of weighted inequalities, see \cite{MR2628851}, but we follow a more 
refined path here.  After application of a the Lerner median inequality, we get a class of dyadic positive operators with particular 
structure.   There is a very precise understanding of the norms of these operators, see \cite{0911.3437}, and we recall these results in \S\ref{s.positive}, analyzing the particular operators of interest in that section.   
This analysis completes the proof.

\begin{ack}
The genesis for the ideas herein begain from my lectures at the Spring School in Analysis, Paesky, Czech Republic.  I thank the organizers for their efforts which lead to a successful School.  
\end{ack}

%%%%%%%%%%%%%%%%%%%%%%%%%%%%%% SECTION  SECTION SECTION
%%%%%%%%%%%%%%%%%%%%%%%%%%%%%% SECTION  SECTION SECTION 
\section{The Haar Shift Result} \label{s.haar}

We begin our dyadic analysis. By a \emph{grid} we mean a collection $ \mathcal D$ of cubes in $ \mathbb R ^{d}$ with  $ Q \cap Q' \in \{\emptyset , Q, Q'\}$ for all $ Q, Q' \in \mathcal D$.   
The cubes can be taken to be a product of half-open half-closed intervals.
We will say that $ \mathcal D$ is a  \emph{dyadic grid} if each cube $ Q \in \mathcal D$ these two properties hold. 
(1)  $ Q$  is the union of $ 2 ^{d}$-subcubes of  equal volume (the children of $ Q$), 
and (2) the set of cubes $ \{ Q' \in \mathcal D \;:\; \lvert  Q'\rvert= \lvert  Q\rvert  \}$ partition $ \mathbb R ^{d}$.  
We will work with dyadic grids below.

We give a (standard) definition of \emph{Haar shifts}.  
By a  \emph{Haar function} we will mean a function $ h _{I}$, supported on $ I$, constant on its children, and orthogonal to $ \chi _{I}$ (and no assumption on normalizations).   
And, by a  \emph{generalized Haar function}  as a function $ h_I$ which is a linear combination of $ \chi _I$, and $ \{\chi _{I' } \;:\; I'\in \textup{Child} (I)\}$. 
Such a function supported on $ I$ but need not be orthogonal to constants.  In the definition, and throughout the paper, 
$ \ell (Q) = \lvert  Q\rvert ^{1/d} $ is the side length of $ Q$. 

%%%%%%%%%%%%%%%%%%%%%%%%%%%%%%  DEFINITION DEFINITION DEFINITION
\begin{definition}\label{d.haar} For integers $(m,n) \in \mathbb N		 ^2 $, we say that  linear operator $ \mathbb S $ is a \emph{ (generalized) Haar shift operator of parameters $ (m,n)$} if 
\begin{equation}\label{e.mn}
\mathbb S  f (x) = \sum_{Q \in \mathcal D}\; \sideset {} { ^ {(m,n)}} \sum_{\substack{Q',R'\in \mathcal D\\ Q',R'\subset Q }} 
\frac { \langle f, h ^{Q'} _{R'} \rangle} {\lvert  Q\rvert } h ^{R'} _{Q'} 
\end{equation}
where (1) in the second sum, the superscript $ ^{(m,n)}$ on the sum means that in addition we require $ \ell (Q') = 2 ^{-m} \ell (Q)$ and $ \ell (R')= 2 ^{-n} \ell (Q)$, and (2) the function $  h ^{Q'} _{R'}$ is a (generalized) Haar function on $ R'$, and $  h ^{R'} _{Q'}$ is one on $ Q'$, with the joint normalization that 
\begin{equation} \label{e.normal}
\lVert  h ^{Q'} _{R'}\rVert_{\infty } \lVert  h ^{R'} _{Q'}\rVert_{\infty } \le 1 \,. 
\end{equation}
In particular, this means that we have the representation 
\begin{equation} \label{e.NOR}
\mathbb S  f (x) = \sum_{Q\in \mathcal D} \lvert  Q\rvert ^{-1} \int _{Q} f (y)  s _{Q} (x,y) \; dy  
\end{equation}
where $ s_Q (x,y)$ is supported on $ Q \times Q$, with $ L ^{\infty }$ norm at most one.  
We say that the \emph{complexity} of $ \mathbb S $ is $ \kappa = \max (m,n)$.  
\end{definition}
%%%%%%%%%%%%%%%%%%%%%%%%%%%%%%  DEFINITION DEFINITION DEFINITION

Particular examples include Haar multipliers,  and the Haar operators central to the result 
of \cite{MR1756958}, in which the Hilbert transform is obtained as a convex combination of a Haar shift of complexity 1. 
Notice that a Haar shift using only Haar functions is necessarily bounded on $ L ^{2}$, with norm independent of the 
complexity type.  
Dyadic paraproducts are the single example in which one should use generalized Haar functions.  The complexity type in this 
case could be taken to be of type $ (0,0)$, and is explictly 
\begin{equation*}
\sum_{Q} a_Q \mathbb E _{Q} f \cdot h_Q 
\end{equation*}
where $ \lvert  a_Q\rvert\le \sqrt {\lvert  Q\rvert } $, and $ h_Q$ is a Haar function.  
In this case, we further assume that $ \mathbb S $ is an 
$ L ^2 $-bounded operator.  We comment that the analysis of Haar shifts has been central to the papers of \cite{MR1756958,MR2657437,1010.0755,MR2628851} 
among several other recent publications.  Their central role in this paper is expected.  

Define the maximal truncations of the Haar shift operator by 
\begin{equation}\label{e.Snat}
\mathbb S _{\natural} f (x) := \sup _{\epsilon >0} \Biggl\lvert  
\sum_{ Q \;:\; \ell (Q) \ge \epsilon }  \lvert  I\rvert ^{-1} \int _{I} f (y)  s _{I} (x,y) \; dy 
\Biggr\rvert
\end{equation}

Our analysis extends to the two-weight setting, and we shift to it now.  Given a pair of weights $ w, \sigma $, we set 
the two weight $ A_p$ characteristic to be 
\begin{equation}\label{e.2Ap}
\llbracket w, \sigma \rrbracket _{A_p} 
:= 
\sup _{Q}\bigl[ \frac {w (Q)} {\lvert  Q\rvert }  \bigr] ^{1/p} 
\bigl[  \frac {\sigma  (Q)} {\lvert  Q\rvert }  \bigr] ^{1/p'}  \,. 
\end{equation}
To connect this to the classical $ A_p$ setting, we would take $ \sigma = w ^{1-p'}$, where we would then have 
$
\llbracket w, \sigma \rrbracket _{A_p} = \lVert w\rVert_{A_p} ^{1/p} 
$. 
Also, in the norm inequalities in the remainder of the paper, we will have a certain asymmetry between $ w$ and $ \sigma $, one  designed so that the inequalities behave well with respect to duality. To dualize, interchange the roles of $ w$ and $ \sigma $, and exchange $ p$ for its dual index.  
This  very useful fact appears without comment below. 

As we are discussing maximal truncations, the maximal function itself will appear, given by 
\begin{equation*}
M f (x) = \sup _{t>0} (2t) ^{-d} \int _{[-t,t] ^{d}} \lvert  f (x+y)\rvert \; dy 
\end{equation*}
The $ A_p$-$A _{\infty }$ estimate for  $ M$ have been obtained in \cite{1103.5562}; these considerations are less complicated than 
those for singular integrals.  (One only needs Sawyer's two weight characterization \cite{MR676801} for the maximal function.) 

%%%%%%%%%%%%%%%%%%%%%%%%%%%%%% THEOREM THEOREM THEOREM
\begin{theorem}\label{t.max} For the Maximal function $ M$, index $ 1<p< \infty $, and a pair of weights $ w, \sigma $ we have 
\begin{equation}\label{e.max}
\lVert M (f \sigma )\rVert_{ L ^{p} (w)} \le 
 C_{p}   \llbracket w, \sigma \rrbracket _{A_p} \| \sigma \|_{A_ \infty } ^{1/p}  \lVert  f \rVert _{L ^{p} (\sigma)}. 
\end{equation}
\end{theorem}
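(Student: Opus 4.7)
The plan is to deduce \eqref{e.max} from Sawyer's two-weight characterization of the maximal function \cite{MR676801}: up to universal constants, \eqref{e.max} is equivalent to the testing condition
\begin{equation*}
\int_Q M(\sigma \chi_Q)^p \, dw \;\lesssim\; \llbracket w, \sigma\rrbracket_{A_p}^p\, \|\sigma\|_{A_\infty}\, \sigma(Q), \qquad Q \text{ a cube}.
\end{equation*}
At the cost of a dimensional constant, I may pass to a finite family of shifted dyadic grids and replace $M$ on the left by the dyadic maximal operator $M_{\mathcal D}$ taken over cubes $R\in\mathcal D$ with $R\subset Q$.

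To verify the testing inequality on a fixed $Q$, I invoke the $\sigma$-principal cubes $\mathcal P\subset\mathcal D$ inside $Q$: put $Q\in\mathcal P$, and inductively let $P'$ be a $\mathcal P$-child of $P\in\mathcal P$ if it is maximal among cubes $P'\subsetneq P$ with $\sigma(P')/|P'| > 2\,\sigma(P)/|P|$. With $E_P\subset P$ the portion of $P$ outside every $\mathcal P$-child of $P$, the sets $\{E_P\}_{P\in\mathcal P}$ partition $Q$, and the stopping rule gives $M_{\mathcal D}(\sigma \chi_Q) \le 2\,\sigma(P)/|P|$ on $E_P$. Therefore
\begin{align*}
\int_Q M_{\mathcal D}(\sigma \chi_Q)^p\, dw &\;\lesssim\; \sum_{P\in\mathcal P}\Bigl(\frac{\sigma(P)}{|P|}\Bigr)^p w(E_P)\\
&\;\le\; \sum_{P\in\mathcal P}\Bigl(\frac{\sigma(P)}{|P|}\Bigr)^p \frac{w(P)}{|P|}\,|E_P|,
\end{align*}
and absorbing one factor of $\sigma(P)/|P|$ into $\llbracket w,\sigma\rrbracket_{A_p}^p$ via \eqref{e.2Ap}, while noting $\sigma(P)/|P|\le M(\sigma\chi_Q)(x)$ for $x\in E_P\subset P\subset Q$, yields
\begin{align*}
\int_Q M_{\mathcal D}(\sigma \chi_Q)^p\, dw &\;\lesssim\; \llbracket w,\sigma\rrbracket_{A_p}^p \sum_{P\in\mathcal P}\frac{\sigma(P)}{|P|}\,|E_P|\\
&\;\le\; \llbracket w,\sigma\rrbracket_{A_p}^p\int_Q M(\sigma\chi_Q)\, dx\\
&\;\le\; \llbracket w,\sigma\rrbracket_{A_p}^p\,\|\sigma\|_{A_\infty}\,\sigma(Q),
\end{align*}
by the Fujii-Wilson form \eqref{e.AzI} of $A_\infty$ at the last step.

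The delicate point is the asymmetric use of $A_p$: bounding $w(E_P)$ by $w(P)$ alone and consuming \emph{both} factors of $\sigma(P)/|P|$ against $A_p$ would telescope along the geometric $\sigma$-growth of the principal cubes and return only the Buckley-type bound $\llbracket w,\sigma\rrbracket_{A_p}$, missing the $\|\sigma\|_{A_\infty}^{1/p}$ refinement. Retaining the Lebesgue factor $|E_P|$ is precisely what allows the residual sum $\sum_P \sigma(P)/|P|\cdot|E_P|$ to be absorbed by the Fujii-Wilson $A_\infty$, which was designed for this purpose. Beyond this trick the argument is routine: principal cubes and the finite shifted-grid reduction are standard, and Sawyer's theorem finishes the job.
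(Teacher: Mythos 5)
Your route---Sawyer's two--weight testing theorem for $M$, a $\sigma$--adapted Calder\'on--Zygmund stopping decomposition of the testing integral, and the Fujii--Wilson form of $A_\infty$ to close the Carleson sum---is exactly the path the paper has in mind: it cites \cite{MR676801} and \cite{1103.5562} for precisely this and supplies no further detail, so your write-up fills in what the text delegates to references. The overall architecture is correct.

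One display needs repair. You pass from $\bigl(\sigma(P)/|P|\bigr)^p w(E_P)$ to $\bigl(\sigma(P)/|P|\bigr)^p \frac{w(P)}{|P|}\,|E_P|$, which presupposes $w(E_P)\le \frac{w(P)}{|P|}\,|E_P|$. As written this is false: nothing forces the $w$--density on the residual set $E_P$ to be dominated by its average over $P$. The inequality does hold up to a constant, because the $\sigma$--stopping rule (averages more than double) gives $\sum_{P'}|P'| \le \tfrac12 |P|$ over the $\mathcal P$--children $P'$, hence $|E_P|\ge \tfrac12|P|$ and therefore $w(E_P)\le w(P)\le 2\,\frac{w(P)}{|P|}\,|E_P|$; but this step must be justified, and it is a $\lesssim$, not a $\le$. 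You should also temper the closing remark: the cruder route of using only $w(E_P)\le w(P)$ and absorbing $\bigl(\sigma(P)/|P|\bigr)^{p-1}\frac{w(P)}{|P|}\le \llbracket w,\sigma\rrbracket_{A_p}^p$ does \emph{not} lose the $A_\infty$ gain and collapse to Buckley --- it leaves the residual $\sum_{P\in\mathcal P}\sigma(P)$, and the same increasing-average structure gives pointwise $\sum_{P}\frac{\sigma(P)}{|P|}\mathbf 1_P\lesssim M(\sigma\chi_Q)$, hence $\sum_P\sigma(P)\lesssim \int_Q M(\sigma\chi_Q)\,dx\le\|\sigma\|_{A_\infty}\sigma(Q)$. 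Both variants go through the Fujii--Wilson bound at the last step; the $|E_P|$ bookkeeping is a convenience, not the source of the $A_\infty$ refinement.
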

%%%%%%%%%%%%%%%%%%%%%%%%%%%%%% THEOREM THEOREM THEOREM

Observe that the lower bound of $  \llbracket w, \sigma \rrbracket _{A_p} $ for the weak-type norm is entirely elementary, 
thus the estimate above clearly show that contribution of the $ A _{\infty }$ norm to the strong-type norm.  

This estimate is equal to or smaller  than those we are seeking to prove.  
Our main technical Theorem is then stated in the two weight language. The corresponding result for the classical operators is of course true. 

%%%%%%%%%%%%%%%%%%%%%%%%%%%%%% THEOREM THEOREM THEOREM
\begin{theorem}\label{t.haarShift}
For $  \mathbb S $ an $ L ^{2} (\mathbb R ^{d})$ bounded  Haar shift operator of complexity $ \kappa $, index   $1<p<\infty$,  and a pair of weights $ w, \sigma $, it holds that 
\begin{align}
\lVert   \mathbb S _{\natural} (\sigma f)  \rVert _{ L ^{p } (w)} &\le C_{\mathbb S,p}   \llbracket w, \sigma \rrbracket _{A_p} 
\bigl\{ 
\|  w \|_{ A _{\infty}}^{1/p'}+\| \sigma \|_{A_ \infty } ^{1/p}\bigr\}  \lVert  f \rVert _{L ^{p} (\sigma)}. 
\end{align}
\end{theorem}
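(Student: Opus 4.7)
The plan follows the roadmap indicated in the introduction: dominate $\mathbb{S}_\natural(\sigma f)$ pointwise by a sparse dyadic positive operator via the Lerner median inequality, invoke the two-weight testing characterization of \cite{0911.3437} for positive operators, and verify the testing hypotheses through a direct computation involving the $A_p$ and $A_\infty$ characteristics.

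First, I apply the Lerner median inequality to $g := \mathbb{S}_\natural(\sigma f)$. Since the Haar shift has complexity $\kappa$, the local oscillation of $g$ on a cube $Q$ is controlled by the average of $|\sigma f|$ over a $\kappa$-step dyadic ancestor $\widehat{Q}$ of $Q$, with constant depending only on $\kappa$. Combined with the sparse family $\mathcal{S}$ produced by the median inequality, this yields a pointwise estimate of the form
\begin{equation*}
|\mathbb{S}_\natural(\sigma f)(x)| \le C_\kappa \sum_{Q \in \mathcal{S}} \langle |\sigma f| \rangle_{\widehat{Q}} \, \chi_Q(x),
\end{equation*}
where $\langle g \rangle_Q := |Q|^{-1}\int_Q g$. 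After absorbing the ancestor shift (which preserves sparseness up to a multiplicative constant depending on $\kappa$), Theorem \ref{t.haarShift} reduces to the two-weight inequality
\begin{equation*}
\|A_{\mathcal{S}}(\sigma f)\|_{L^p(w)} \lesssim \llbracket w,\sigma\rrbracket_{A_p}\bigl(\|w\|_{A_\infty}^{1/p'} + \|\sigma\|_{A_\infty}^{1/p}\bigr)\, \|f\|_{L^p(\sigma)}
\end{equation*}
for the sparse positive operator $A_{\mathcal{S}} g := \sum_{Q \in \mathcal{S}} \langle g \rangle_Q \chi_Q$.

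For $A_{\mathcal{S}}$, the characterization in \cite{0911.3437} reduces the two-weight norm bound to verifying the dual testing inequalities
\begin{gather*}
\sup_R \sigma(R)^{-1/p}\, \|A_{\mathcal{S}}(\sigma \chi_R)\|_{L^p(w)} \lesssim \llbracket w,\sigma \rrbracket_{A_p}\, \|\sigma\|_{A_\infty}^{1/p}, \\
\sup_R w(R)^{-1/p'}\, \|A_{\mathcal{S}}^{\ast}(w \chi_R)\|_{L^{p'}(\sigma)} \lesssim \llbracket w,\sigma \rrbracket_{A_p}\, \|w\|_{A_\infty}^{1/p'}.
\end{gather*}
For the first, expanding the $L^p(w)$-norm as a sum over cubes $Q \in \mathcal{S}$ with $Q \subset R$ and using the $A_p$ inequality on each cube peels off a factor of $\llbracket w,\sigma\rrbracket_{A_p}$; the residual sum is controlled by
\begin{equation*}
\int_R M(\sigma \chi_R)\, dx \lesssim \|\sigma\|_{A_\infty}\, \sigma(R),
\end{equation*}
which is precisely the definition \eqref{e.AzI}. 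The dual inequality is symmetric in $(w,\sigma)$ (with $p$ replaced by $p'$) and yields the $\|w\|_{A_\infty}^{1/p'}$ factor.

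The principal obstacle is the first step: one needs the sparse pointwise domination for the \emph{maximal} truncation $\mathbb{S}_\natural$, not just for $\mathbb{S}$ itself, and with constant depending only on $\kappa$. The key observation is that an $L^2$-bounded Haar shift of complexity $\kappa$ has controlled local oscillation on any cube at the scale of its $\kappa$-step ancestor; a Calder\'on--Zygmund-type comparison then transfers this control from $\mathbb{S}$ to $\mathbb{S}_\natural$. Once this pointwise bound is in hand, the subsequent testing computation is a routine application of the machinery in \cite{0911.3437}, with the $A_\infty$ characteristic entering exactly at the step where \eqref{e.AzI} is used to estimate the residual sum.
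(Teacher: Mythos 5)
Your overall architecture is the same as the paper's: apply Lerner's median inequality to $\mathbb S_\natural(\sigma f)$, bound the local oscillations by averages over at most $\kappa$ dyadic ancestors (using the weak $(1,1)$ bound for $\mathbb S_\natural$ and the fact that $\mathbb S_\natural g$ is constant on $Q$ when $g$ lives off $Q^{(\kappa)}$), regroup into sparse (type $L$) positive dyadic operators, and then invoke the two-weight theory of \cite{0911.3437}, which reduces the strong $L^p(\sigma)\to L^p(w)$ bound to the two dual Sawyer testing conditions. Up to and including that reduction your plan matches the paper.

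The gap is in your verification of the testing conditions, which is where essentially all of the work in the paper lies. You propose to ``expand the $L^p(w)$-norm as a sum over cubes $Q\subset R$, peel off $\llbracket w,\sigma\rrbracket_{A_p}$ on each cube, and control the residual by $\int_R M(\sigma\chi_R)\,dx\lesssim\|\sigma\|_{A_\infty}\sigma(R)$.'' This expansion is not valid for $p>1$: the cubes in the sparse family overlap, and sparseness with respect to Lebesgue measure does \emph{not} give
\begin{equation*}
\int_R \Bigl(\sum_{Q\in\mathcal S,\,Q\subset R}\langle\sigma\rangle_Q\chi_Q\Bigr)^{p} w\,dx \;\lesssim\; \sum_{Q\in\mathcal S,\,Q\subset R}\langle\sigma\rangle_Q^{p}\,w(Q)\,;
\end{equation*}
if $w$ is concentrated where a long chain of nested sparse cubes accumulates, the left side exceeds the right by an unbounded overlap factor $N^{p-1}$. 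The paper's proof of Proposition~\ref{t.forLerner} supplies exactly the missing mechanism: one runs a corona decomposition with stopping cubes on which the average of $w$ increases by a factor $4$, and within each stopping block one freezes both the local $A_p$ ratio ($\simeq 2^a$, with only $O(\log \llbracket w,\sigma\rrbracket_{A_p}^{p})$ values of $a$, whose geometric sum produces the single power of $\llbracket w,\sigma\rrbracket_{A_p}$) and the ratio $\mathbb E_Q w/\mathbb E_S w\simeq 2^{-b}$. Freezing these quantities makes $\sigma$ comparable to a multiple of Lebesgue measure on the cubes of the block, so the exponential overlap estimate coming from the type-$L$ condition (Lemma~\ref{l.Ldistrib}) transfers from Lebesgue measure to $\sigma$; only then can one integrate the $p'$-th power. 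The $A_\infty$ characteristic enters not through a per-cube peel but through the Carleson property of the \emph{stopping family}, $\sum_{S\in\mathcal S(Q_0)} w(S)\lesssim\int_{Q_0}M(w\chi_{Q_0})\,dx\le\|w\|_{A_\infty}w(Q_0)$, which is the correct incarnation of the $\int M$ estimate you cite. Without the corona decomposition and the distributional/overlap control, the step from the testing condition to the claimed $A_p$--$A_\infty$ bound does not go through as written.
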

%%%%%%%%%%%%%%%%%%%%%%%%%%%%%% THEOREM THEOREM THEOREM

Our estimate on the operator norm will be exponential in the complexity parameter $ \kappa $, so that we will not attempt to 
quantify it.  It is an open question if one can  improve this estimate to \emph{polynomial in complexity}, a question we 
return to in the concluding section of this paper.   In particular, a positive answer  would immediately imply 
that our main Theorem holds for \emph{all} continuous Calder\'on-Zygmund operators.  

%%%%%%%%%%%%%%%%%%%%%%%%%%%%%% SUBSECTION SUBSECTION SUBSECTION SUBSECTION
 %%%%%%%%%%%%%%%%%%%%%%%%%%%%%% SUBSECTION SUBSECTION SUBSECTION SUBSECTION 
\subsection{Deducing the Main Theorem}%\label{ss.}

If $ T$ is a classical singular integral operator, we have the following representation for it.  
There is a measure space  $ (\Omega, \mathcal A) $, and (not necessarily positive) measure $ \mu $ on $ (\Omega, \mathcal A) $, 
with $0< \lvert  \mu \rvert (\Omega ) < \infty  $, 
so that for almost all $ \omega \in \Omega $, we have a Haar shift operator $ \mathbb S  ^{\omega } $ of complexity at most one, 
so that for $ f, g $ smooth compactly supported functions with $ \textup{dist} (\textup{supp} (f), \textup{supp} (g)) >0 $, 
\begin{equation} \label{e.rep}
\langle T f , g \rangle = \langle K \ast f, g  \rangle = \int _{\Omega } \langle \mathbb S ^{\omega } f, g \rangle \; d \mu  \,. 
\end{equation}
We remark that if one is interested in the Hilbert transform \cite{MR1756958} or smooth one-dimensional odd Calder\'on-Zygmund kernels \cite{MR2680056}, 
then the measure $ \mu $ can be taken positive.  And indeed, for the Hilbert transform there is a strong form of the equality 
above \cite{MR2464252}.  For the Beurling operator, however, $ \mu $ will be complex valued \cite{MR1992955}. 
(And much of the difficulty in that paper is showing that the measure $ \mu $ is non-trivial!) 
Thus, our main result, without truncations, follows immediately from the Haar shift version. 

Concerning truncations, setting 
\begin{equation*}
\mathbb S  ^{\omega }  f = \sum_{Q \in \mathcal D} \lvert  Q\rvert ^{-1} \int _{I } s ^{\omega } _{Q} (x,y) f (y) \; dy   
\end{equation*}
let $ \mathbb S ^{\omega } _{\epsilon }$ be the corresponding operator with sum restricted to those $ q$ with $ \ell (Q) > \epsilon  $.  Apply \eqref{e.rep} with $ f, g$ so that $ \textup{dist} (\textup{supp} (f), \textup{supp} (g)) \ge \epsilon  $. 
We see that the those $ Q$ with $ \ell (Q) < c \epsilon $ make no contribution in \eqref{e.rep}. From this, it follows that 
we will have 
\begin{equation*}
 \langle T _{\epsilon } \ast f, g  \rangle = \int _{\Omega } \langle \mathbb S ^{w} _{\epsilon } f, g \rangle  \; d \mu 
\end{equation*}
where $ T _{\epsilon } $ is an operator with kernel $ K _{\epsilon } (x,y)$ so that for $ \lvert  x-y\rvert \ge C \epsilon   $ we have 
$ K _{\epsilon } (x,y)$ equals $  K (x-y)$, the un-truncated kernel. Moreover, by the normalizing condition \eqref{e.normal}, it follows that 
\begin{equation*}
\lvert K _{\epsilon } (x-y)\rvert \lesssim \epsilon ^{-d}\,,   \qquad \lvert  x-y\rvert \lesssim \epsilon \,.  
 \end{equation*}
 That is, for $ \lvert  x-y\rvert < C \epsilon  $, we have a kernel which at worst performs an average of $ f$.  
From this, we see that we have 
\begin{equation*}
T _{\natural} f \lesssim M f + \int _{\Omega } \mathbb S _{\natural} ^{\omega } f \; d \lvert  \mu \rvert \,.  
\end{equation*}
In view of the fact that the maximal function obeys better bounds, see Theorem~\ref{t.max}, 
our main theorem for the maximal truncation of classical singular integrals follows from that for Haar shifts.

%%%%%%%%%%%%%%%%%%%%%%%%%%%%%% SECTION  SECTION SECTION
%%%%%%%%%%%%%%%%%%%%%%%%%%%%%% SECTION  SECTION SECTION 
\section{Dyadic Positive Operators} \label{s.positive}

We recall elements of the main results from \cite{0911.3437}, which codifies and extends the arguments of 
\cite{MR719674,MR930072}.  The latter papers of Sawyer characterized the strong-type two-weight inequalities for the 
fractional integrals, setting out important elements of the two-weighted theory.  

 Let $ \boldsymbol \tau  = \{\tau_Q \;:\; Q\in \mathcal D\}$ be non-negative constants, and define  linear operators by 
 \begin{align}
T_{\boldsymbol \tau } f \coloneqq \sum _{Q\in \mathcal D} \tau_Q \cdot \mathbb E _{Q}f \cdot \mathbf
1_{Q} \,,
\end{align}
Here, we are defining the operator $\mathbb S_\alpha$ and a  `localization' of $\mathbb S_\alpha$
corresponding to a cube $R$.

Below, we consider the $ L ^{p} (\sigma ) $ to $ L ^{p} (w )$ mapping properties of $ \operatorname T
_{\boldsymbol \tau}$, where $ 1< p < \infty $.    First, we have the weak-type inequalities.

%%%%%%%%%%%%%%%%%%%%%%%%%%%%%% THEOREM THEOREM THEOREM
\begin{theorem}\label{t.dyadicWeakSawyer} Let $\boldsymbol \tau  $ be non-negative constants, and 
$ w, \sigma $ weights. Let $ 1<p < \infty $. 
We have the equivalence  below. 
\begin{align}\label{e.WWeak1}
\lVert  T_{\boldsymbol \tau} (\sigma \cdot )  \rVert_{ L ^{p} (\sigma ) \mapsto L ^{p, \infty } (w )} 
&\simeq 
\mathbf T_{p'}   (w, \sigma )
\coloneqq 
\sup _{R\in \mathcal D}  w (R) ^{-1/p'} \Bigl\lVert 
 \sum _{\substack{Q\in \mathcal D\\ Q\subset R}
} \tau_Q \cdot \mathbb E _{Q}f \cdot \mathbf
1_{Q}
\Bigr\rVert_{ L ^{p'} (\sigma )}  \,.
\end{align}
\end{theorem}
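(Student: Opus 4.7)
The plan is to follow Sawyer's classical two-weight testing strategy, adapted to the dyadic positive operator $T_{\boldsymbol\tau}$. The equivalence splits into an easy direction (necessity of the testing constant) and a harder direction (sufficiency); the latter is handled by a principal-cube stopping-time decomposition combined with a Carleson embedding.

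For the necessity direction ($\mathbf T_{p'}\lesssim \lVert T_{\boldsymbol\tau}(\sigma\,\cdot)\rVert_{L^p(\sigma)\to L^{p,\infty}(w)}$) I would use that $T_{\boldsymbol\tau}$ is self-adjoint against Lebesgue measure in the sense
\[
\int T_{\boldsymbol\tau}(\sigma f)\cdot g\,dw \;=\; \sum_{Q\in\mathcal D}\tau_Q\,\mathbb E_Q(\sigma f)\,\mathbb E_Q(wg)\,\lvert Q\rvert \;=\; \int f\cdot T_{\boldsymbol\tau}(wg)\,d\sigma.
\]
Via the duality $L^{p,\infty}(w)^{*}=L^{p',1}(w)$, the weak-type inequality for $T_{\boldsymbol\tau}(\sigma\,\cdot)$ is thus equivalent, up to constants, to the strong-type inequality $T_{\boldsymbol\tau}(w\,\cdot)\colon L^{p',1}(w)\to L^{p'}(\sigma)$. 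Testing this latter inequality against $\mathbf 1_R$, whose $L^{p',1}(w)$ norm is $p'\,w(R)^{1/p'}$, and discarding by positivity the contributions to $T_{\boldsymbol\tau}(w\mathbf 1_R)$ coming from cubes $Q\not\subset R$, reproduces precisely the quantity defining $\mathbf T_{p'}(w,\sigma)$.

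For sufficiency I would instead establish the dual bound $\lVert T_{\boldsymbol\tau}(w\mathbf 1_E)\rVert_{L^{p'}(\sigma)}\lesssim \mathbf T_{p'}\,w(E)^{1/p'}$ for every measurable $E$. Let $\mathcal P$ be the family of \emph{principal cubes} obtained by the standard stopping-time on $w\mathbf 1_E$: the maximal cubes of $\mathcal D$, plus recursively the maximal $P'\subset P\in\mathcal P$ on which $\mathbb E_{P'}(w\mathbf 1_E)>2\,\mathbb E_P(w\mathbf 1_E)$. Write $\mathcal E(P)$ for the cubes $Q\subseteq P$ whose minimal principal ancestor is $P$; these sets partition $\mathcal D$, and by construction $\mathbb E_Q(w\mathbf 1_E)\le 2\,\mathbb E_P(w\mathbf 1_E)$ whenever $Q\in\mathcal E(P)$. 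This gives the pointwise domination
\[
T_{\boldsymbol\tau}(w\mathbf 1_E)\;\le\; 2\sum_{P\in\mathcal P}\mathbb E_P(w\mathbf 1_E)\cdot S_P,\qquad S_P\;:=\sum_{Q\in\mathcal E(P)}\tau_Q\,\mathbf 1_Q,
\]
while the local testing condition controls each piece via $\lVert S_P\rVert_{L^{p'}(\sigma)}\lesssim \mathbf T_{p'}\,w(P)^{1/p'}$.

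The main obstacle is assembling these block-by-block bounds into a global $L^{p'}(\sigma)$ estimate without squandering the quasi-orthogonal structure: the principal cubes are nested rather than disjoint, so a triangle inequality in $P$ is wasteful. The rescue is the observation that at every point $x$ the averages $\mathbb E_P(w\mathbf 1_E)$ along the chain of principal cubes $P\ni x$ grow geometrically with depth, so the pointwise sum over that chain is dominated by an $\ell^{p'}$ aggregate of its terms. Coupling this quasi-orthogonality with the Carleson packing estimate $\sum_{P\in\mathcal P}\mathbb E_P(w\mathbf 1_E)^{p'}\,w(P)\lesssim w(E)$ baked into the stopping construction yields $\lVert T_{\boldsymbol\tau}(w\mathbf 1_E)\rVert_{L^{p'}(\sigma)}\lesssim \mathbf T_{p'}\,w(E)^{1/p'}$. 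A standard layer-cake passage from indicators $\mathbf 1_E$ to general functions in $L^{p',1}(w)$ then promotes this to the required strong-type bound, and dualizing back closes the equivalence.
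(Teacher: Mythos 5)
Your overall architecture is the standard Sawyer-type route that the paper follows by citation rather than proof (Theorem~\ref{t.dyadicWeakSawyer} is recalled from \cite{0911.3437}, which codifies \cite{MR719674}): dualize $L^{p,\infty}(w)$ against $L^{p',1}(w)$, use positivity and self-adjointness with respect to Lebesgue measure to reduce to the restricted bound $\lVert T_{\boldsymbol\tau}(w\mathbf 1_E)\rVert_{L^{p'}(\sigma)}\lesssim \mathbf T_{p'}\,w(E)^{1/p'}$, and your necessity direction (test $g=\mathbf 1_R$, discard cubes $Q\not\subset R$ by positivity) is correct; you also read the statement correctly, the ``$\mathbb E_Q f$'' there being a typo for $\mathbb E_Q w$. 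The sufficiency step, however, has two genuine gaps. First, the testing constant controls $\lVert\sum_{Q\subset P}\tau_Q\,\mathbb E_Q(w)\,\mathbf 1_Q\rVert_{L^{p'}(\sigma)}$, not $\lVert S_P\rVert_{L^{p'}(\sigma)}$ with $S_P=\sum_{Q\in\mathcal E(P)}\tau_Q\mathbf 1_Q$: you cannot delete the factors $\mathbb E_Q(w)$, which may be arbitrarily small on $P$, and your stopping rule only gives the upper bound $\mathbb E_Q(w\mathbf 1_E)\le 2\,\mathbb E_P(w\mathbf 1_E)$, never a lower bound on $\mathbb E_Q(w)$, so neither $S_P$ nor $\mathbb E_P(w\mathbf 1_E)\,S_P$ is dominated by the tested expression. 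Second, the claimed packing estimate $\sum_{P}\mathbb E_P(w\mathbf 1_E)^{p'}\,w(P)\lesssim w(E)$ is false: it is not even homogeneous in $w$ (the left side scales like $\lambda^{p'+1}$ under $w\mapsto\lambda w$, the right side like $\lambda$), and already a single top cube with $w\equiv\lambda$ large and $\lvert E\rvert$ a small fraction of $\lvert P_0\rvert$ violates it. Stopping on Lebesgue averages of $w\mathbf 1_E$ while weighting the packing sum by $w(P)$ is exactly the mismatch.

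Both defects are repaired by stratifying on the $w$-averages $w(E\cap Q)/w(Q)$ instead. Write $T_{\boldsymbol\tau}(w\mathbf 1_E)=\sum_Q\tau_Q\,\frac{w(E\cap Q)}{w(Q)}\,\mathbb E_Q(w)\,\mathbf 1_Q$, and for $j\ge 0$ collect the cubes with $2^{-j-1}<w(E\cap Q)/w(Q)\le 2^{-j}$; each lies in one of the disjoint maximal cubes $R$ with $w(E\cap R)>2^{-j-1}w(R)$. Inside each such $R$ the factor $2^{-j}$ lets you reinstate $\mathbb E_Q(w)$ and apply the testing condition, giving a contribution $\lesssim 2^{-j}\mathbf T_{p'}\,w(R)^{1/p'}$; disjointness of the $R$'s together with the universal weak $(1,1)$ bound for the dyadic maximal function with respect to the measure $w$ gives $\sum_R w(R)\le 2^{j+1}w(E)$, so the $j$-th layer contributes $\lesssim 2^{-j/p}\,\mathbf T_{p'}\,w(E)^{1/p'}$, summable in $j$. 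This is essentially the argument of \cite{0911.3437}; equivalently, your principal-cube scheme becomes correct if the stopping and the quasi-orthogonality are run on $w(E\cap P)/w(P)$ rather than on $\mathbb E_P(w\mathbf 1_E)$, since for that quantity the packing estimate with weights $w(P)$ does hold. Your layer-cake passage from indicators to $L^{p',1}(w)$ and the final dualization are fine as stated.
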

%%%%%%%%%%%%%%%%%%%%%%%%%%%%%% THEOREM THEOREM THEOREM

There is a corresponding, harder,  strong-type characterization.  

%%%%%%%%%%%%%%%%%%%%%%%%%%%%%% THEOREM THEOREM THEOREM
\begin{theorem}\label{t.dyadicStrong}  Under the same assumptions as Theorem~\ref{t.dyadicWeakSawyer} we have 
the equivalences of norms below.  
\begin{align}\label{e.strong1}
\lVert  T_{\boldsymbol \tau} (\sigma \cdot )  \rVert_{ L ^{p} (\sigma ) \mapsto L ^{p } (w )} 
&\simeq 
\mathbf T_{p'}(w, \sigma )
+
\mathbf T_{p}(\sigma ,w )
\,.
\end{align}
\end{theorem}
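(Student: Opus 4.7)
The necessity of both testing conditions is immediate. Applying the strong-type inequality to $f=\mathbf 1_R$ and retaining only the cubes $Q\subset R$ in $T_{\boldsymbol\tau}(\sigma f)$ yields the lower bound $\mathbf T_p(\sigma,w)\lesssim \lVert T_{\boldsymbol\tau}(\sigma\cdot)\rVert_{L^p(\sigma)\to L^p(w)}$. Since $T_{\boldsymbol\tau}$ is self-adjoint with respect to Lebesgue measure, the adjoint of $f\mapsto T_{\boldsymbol\tau}(\sigma f)\colon L^p(\sigma)\to L^p(w)$ is the operator $g\mapsto T_{\boldsymbol\tau}(wg)\colon L^{p'}(w)\to L^{p'}(\sigma)$ of equal norm; testing it on $\mathbf 1_R$ delivers $\mathbf T_{p'}(w,\sigma)$. (Alternatively, one direction is already Theorem~\ref{t.dyadicWeakSawyer} combined with $\lVert\cdot\rVert_{L^{p,\infty}(w)}\le\lVert\cdot\rVert_{L^p(w)}$.)

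The substance is sufficiency. By duality it suffices to prove the bilinear estimate
\begin{equation*}
\mathcal B(f,g) := \sum_{Q\in\mathcal D}\tau_Q \frac{\sigma(Q)w(Q)}{\lvert Q\rvert}\, \mathbb E^\sigma_Q f\cdot \mathbb E^w_Q g \;\lesssim\; \bigl(\mathbf T_p(\sigma,w)+\mathbf T_{p'}(w,\sigma)\bigr) \lVert f\rVert_{L^p(\sigma)}\lVert g\rVert_{L^{p'}(w)}
\end{equation*}
for $f,g\ge 0$, with $\mathbb E^\mu_Q h := \mu(Q)^{-1}\int_Q h\,d\mu$. I would construct parallel principal cube families $\mathcal F,\mathcal G$: the principal children of $F\in\mathcal F$ are the maximal $F'\subsetneq F$ with $\mathbb E^\sigma_{F'} f >2\,\mathbb E^\sigma_F f$, and $\mathcal G$ is defined analogously for $(g,w)$. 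The standard computation gives $\mathcal F$ is $\tfrac12$-sparse against $\sigma$ and $\mathcal G$ is $\tfrac12$-sparse against $w$, so the dyadic weighted maximal inequality produces the Carleson-type bounds
\begin{equation*}
\sum_{F\in\mathcal F}(\mathbb E^\sigma_F f)^p \sigma(F) \lesssim \lVert f\rVert_{L^p(\sigma)}^p, \qquad \sum_{G\in\mathcal G}(\mathbb E^w_G g)^{p'} w(G) \lesssim \lVert g\rVert_{L^{p'}(w)}^{p'}.
\end{equation*}

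For each $Q$, writing $F(Q)=\pi_{\mathcal F}(Q)$ and $G(Q)=\pi_{\mathcal G}(Q)$ for the smallest principal cubes containing $Q$, we have $\mathbb E^\sigma_Q f\le 2\mathbb E^\sigma_{F(Q)} f$, $\mathbb E^w_Q g\le 2\mathbb E^w_{G(Q)} g$, and either $F(Q)\subseteq G(Q)$ or $G(Q)\subsetneq F(Q)$. I would partition $\mathcal B(f,g)$ along these two cases. In the case $F\subseteq G$, holding $F$ fixed, the inner sum is controlled via the $\mathbf T_p$ testing condition by H\"older:
\begin{align*}
\sum_{Q:\,F(Q)=F}\tau_Q\frac{\sigma(Q)w(Q)}{\lvert Q\rvert}
&\le \int T_{\boldsymbol\tau}(\sigma\mathbf 1_F)\, w\,dx \\
&\le \lVert T_{\boldsymbol\tau}(\sigma\mathbf 1_F)\rVert_{L^p(w)}\, w(F)^{1/p'}
\;\le\; \mathbf T_p(\sigma,w)\,\sigma(F)^{1/p}w(F)^{1/p'}.
\end{align*}
The remaining sum over pairs $(F,G)$ with $F$ principal-below $G$ is then estimated by a double application of H\"older's inequality, absorbing the factor $\sigma(F)^{1/p}$ against the $\sigma$-Carleson sum in the $L^p$ slot and $w(G)^{1/p'}$ against the $w$-Carleson sum in the $L^{p'}$ slot. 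The symmetric case $G\subsetneq F$ is handled identically and yields the $\mathbf T_{p'}$ contribution.

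The main obstacle is the final H\"older step: after applying the testing inequality one must organize the double sum over stopping pairs so that the two exponents $p,p'$ are matched to the $\sigma$- and $w$-sparse families without duplication, and so that the telescoping induced by each principal-ancestor relation is exploited. This is precisely Sawyer's mechanism from \cite{MR719674,MR930072}, systematized for general dyadic positive operators in \cite{0911.3437}; once the parallel stopping-time structure is in place the rest is essentially bookkeeping.
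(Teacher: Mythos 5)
Your sketch is correct and is precisely the parallel-corona (principal cube) argument of Sawyer, systematized in \cite{0911.3437}: the paper itself gives no proof of Theorem~\ref{t.dyadicStrong}, but simply cites that reference, which proceeds exactly as you describe (dualize to a bilinear form, build $\sigma$- and $w$-principal cubes, split according to which stopping cube is deeper, apply the testing inequality to the inner sum over a fixed principal cube, and close with H\"older against the two Carleson/quasi-orthogonality bounds). The one place to be slightly careful, which you gesture at in your last paragraph, is that after extracting $w(F)^{1/p'}$ (not $w(G)^{1/p'}$) from the testing step, the second H\"older factor $\sum_F(\mathbb E^w_{G(F)}g)^{p'}w(F)$ must be collapsed onto $\mathcal G$ by grouping the disjoint $F$'s with a common $G(F)=G$ and using $\sum_{F:G(F)=G}w(F)\le w(G)$ before invoking the $w$-Carleson sum.
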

%%%%%%%%%%%%%%%%%%%%%%%%%%%%%% THEOREM THEOREM THEOREM

Notice that the strong type norm is controlled by the larger of two weak-type norms.

%%%%%%%%%%%%%%%%%%%%%%%%%%%%%% SUBSECTION SUBSECTION SUBSECTION SUBSECTION
 %%%%%%%%%%%%%%%%%%%%%%%%%%%%%% SUBSECTION SUBSECTION SUBSECTION SUBSECTION 
\subsection{A Particular Class of Operators}%\label{ss.}

We consider a  class of operators,  motivated by our upcoming application of Lerner's median inequality.  

%%%%%%%%%%%%%%%%%%%%%%%%%%%%%%  DEFINITION DEFINITION DEFINITION
\begin{definition}\label{d.Lgood} We say that a collection $ \mathcal L$ of dyadic cubes is type $ L$ if this condition holds. 
We have for a constant $ \Lambda = \Lambda _{\mathcal L}>0$  so that 
\begin{equation}\label{e.Lambda}
\sup _{Q\in \mathcal L} \mathbb E _{Q} \operatorname {exp}\Bigl( \Lambda ^{-1} \sum_{Q' \in \mathcal L \;:\; Q'\subset Q} \mathbf 1_{Q} \Bigr)  
\le 1 \,. 
\end{equation}
For such a collection $ \mathcal L$, define $ T_{\mathcal L} f := \sum_{Q\in \mathcal L} \mathbf 1_{Q} \cdot \mathbb E _{Q} f $. 
\end{definition}
%%%%%%%%%%%%%%%%%%%%%%%%%%%%%%  DEFINITION DEFINITION DEFINITION

A simple sufficent condition for \eqref{e.Lambda} is e.\thinspace g. that $ \bigl\lvert  
\bigcup \{Q' \in \mathcal L \;:\; Q'\subsetneq Q\}
\bigr\rvert < \tfrac 12 \lvert  Q\rvert $.  The advantage of the condition above is that it is the minimal condition needed to 
complete our proof, and it conveniently quantifies our estimate.  

The rationale for this definition will be come clear after the discussion in \S\ref{s.lerner}. But note that the collection $ \mathcal L$ is 
`thin' in that the $ \mathcal L$-children of any cube in the collection must be `thin.'  This notion of thinness depends upon the 
constant $ \Lambda  _{\mathcal L}$; we will have to take this constant to be exponentially large in the complexity of the Haar shift we consider. 

%%%%%%%%%%%%%%%%%%%%%%%%%%%%%% THEOREM THEOREM THEOREM
\begin{proposition}\label{t.forLerner}  For a collection of dyadic cubes that is of type $ L$,  $ 1<p< \infty $, 
and a pair of weights $ w , \sigma $, we have 
\begin{align}\label{e.typeLweak}
\lVert T_{\mathcal L} ( \cdot \sigma )\rVert_{ L ^{p} (\sigma ) \mapsto L ^{p, \infty } (w)}  &\lesssim 
\Lambda 
\llbracket w, \sigma \rrbracket _{A_p} 
\lVert w \rVert_{A _{\infty }} ^{1/p'}\,,
\\ \label{e.typeLstrong}
\lVert T_{\mathcal L} ( \cdot \sigma )\rVert_{ L ^{p} ( \sigma ) \mapsto L ^{p } (w)}  &\lesssim  \Lambda 
 \llbracket w, \sigma \rrbracket _{A_p} 
\max \bigl\{     \lVert \sigma \rVert_{A _{\infty }} ^{1/p} \,,\,  
 \lVert w \rVert_{A _{\infty }} ^{1/p'}
\bigr\} \,. 
\end{align}
\end{proposition}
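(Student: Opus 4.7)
The plan is to reduce both \eqref{e.typeLweak} and \eqref{e.typeLstrong} to a single dyadic Sawyer-type testing inequality via Theorems~\ref{t.dyadicWeakSawyer} and \ref{t.dyadicStrong}, and then verify that testing inequality by combining the sparse packing implicit in \eqref{e.Lambda} with the Fujii--Wilson $A_\infty$ identity encoded in \eqref{e.AzI}.

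\textbf{Reduction.} By Theorem~\ref{t.dyadicStrong}, the strong norm in \eqref{e.typeLstrong} is comparable to $\mathbf T_{p'}(w,\sigma)+\mathbf T_{p}(\sigma,w)$, and each of these testing constants is a weak-type norm by Theorem~\ref{t.dyadicWeakSawyer}. The substitution $(w,p)\leftrightarrow(\sigma,p')$ preserves $\llbracket w,\sigma\rrbracket_{A_p}$ and the type-$L$ hypothesis, and it exchanges the two entries inside the maximum of \eqref{e.typeLstrong}. Both parts of the proposition thus reduce to \eqref{e.typeLweak}, and Theorem~\ref{t.dyadicWeakSawyer} reduces \eqref{e.typeLweak} in turn to the testing inequality: for every dyadic cube $R$,
\[
 \Bigl\lVert \sum_{Q \in \mathcal L,\ Q \subseteq R} \mathbb E_{Q}(w)\,\mathbf 1_{Q} \Bigr\rVert_{L^{p'}(\sigma)} \lesssim \Lambda\, \llbracket w,\sigma\rrbracket_{A_p}\, \lVert w\rVert_{A_\infty}^{1/p'}\, w(R)^{1/p'}.
\]

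\textbf{Core estimate.} I would prove this testing inequality in three moves. First, condition \eqref{e.Lambda} is a quantitative form of sparseness: a Markov/stopping-time argument applied to the exponentially integrable counting function $\sum_{Q'\in\mathcal L,\ Q'\subsetneq Q}\mathbf 1_{Q'}$ decomposes $\mathcal L$ into at most $O(\Lambda)$ classical sparse subfamilies $\mathcal L_j$, each equipped with pairwise disjoint major sets $E_Q \subseteq Q$ with $|E_Q|\ge |Q|/2$. Second, the sparse $L^{p'}(\sigma)$-Carleson embedding applied to each $\mathcal L_j$, combined with the $A_p$ identity $\mathbb E_{Q}(w)^{p'}\sigma(Q)\le \llbracket w,\sigma\rrbracket_{A_p}^{p'}\,w(Q)$ (obtained by raising \eqref{e.Ap} to the power $p'-1$), dominates the $p'$-th power of the $L^{p'}(\sigma)$-norm of the sparse piece by $\llbracket w,\sigma\rrbracket_{A_p}^{p'}\sum_{Q\in\mathcal L_j,\,Q\subseteq R} w(Q)$. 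Third, the Fujii--Wilson packing
\[
\sum_{Q\in\mathcal L_j,\ Q\subseteq R} w(Q) \lesssim \lVert w\rVert_{A_\infty}\,w(R)
\]
is proved by the chain $w(Q)\le 2|E_Q|\,\mathbb E_Q(w)\le 2\int_{E_Q} M(w\mathbf 1_R)\,dx$, summed by disjointness of the $E_Q$ and closed by the defining identity $\int_R M(w\mathbf 1_R)\le \lVert w\rVert_{A_\infty}\, w(R)$ from \eqref{e.AzI}. Summing over the $O(\Lambda)$ sparse pieces yields the testing bound with constant $\Lambda\,\llbracket w,\sigma\rrbracket_{A_p}\,\lVert w\rVert_{A_\infty}^{1/p'}$.

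\textbf{Main obstacle.} The quantitatively delicate point is the opening combinatorial decomposition of $\mathcal L$ into $O(\Lambda)$ classical sparse subfamilies, losing only the single factor $\Lambda$ and nothing of the weight characteristics. Every subsequent step is arranged to be quantitatively sharp: the sparse Carleson embedding contributes only a dimensional constant, the $A_p$ rewriting is an algebraic identity, and the Fujii--Wilson packing uses $\lVert w\rVert_{A_\infty}$ only to its first power. This \emph{linearity} in $\lVert w\rVert_{A_\infty}$ is precisely what produces the exponent $1/p'$ on the right-hand side of \eqref{e.typeLweak}, and it is the central reason the Hytönen--Pérez formulation \eqref{e.AzI} of $A_\infty$ is the correct tool here.
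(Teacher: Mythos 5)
Your reduction to the Sawyer testing inequality via Theorems \ref{t.dyadicWeakSawyer} and \ref{t.dyadicStrong}, the duality bookkeeping, and your final Fujii--Wilson packing step are all correct and coincide with the paper. The gap is your second step, the core claim
\begin{equation*}
\int_R\Bigl(\sum_{Q\in\mathcal L_j,\,Q\subseteq R}\mathbb E_Q(w)\,\mathbf 1_Q\Bigr)^{p'}\,d\sigma\;\lesssim\;\llbracket w,\sigma\rrbracket_{A_p}^{p'}\sum_{Q\in\mathcal L_j,\,Q\subseteq R}w(Q)\,,
\end{equation*}
which does not follow from ``the sparse $L^{p'}(\sigma)$-Carleson embedding combined with the per-cube $A_p$ identity.'' After dualizing, the embedding you need is $\sum_Q\sigma(Q)\bigl(\mathbb E^{\sigma}_Q g\bigr)^{p}\lesssim\lVert g\rVert_{L^p(\sigma)}^p$, and the dyadic Carleson embedding theorem relative to $\sigma$ requires the packing condition $\sum_{Q\subseteq R}\sigma(Q)\lesssim\sigma(R)$; a family that is sparse with respect to Lebesgue measure satisfies this only with constant $\lVert\sigma\rVert_{A_\infty}$. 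Feeding that into your scheme produces the testing constant $\llbracket w,\sigma\rrbracket_{A_p}\lVert\sigma\rVert_{A_\infty}^{1/p}\lVert w\rVert_{A_\infty}^{1/p'}$, which fails to prove the weak-type bound \eqref{e.typeLweak} (whose right-hand side contains no $\lVert\sigma\rVert_{A_\infty}$ factor) and degrades \eqref{e.typeLstrong} from the maximum to the product of the two $A_\infty$ characteristics.

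The displayed inequality is in fact elementary when $p\ge 2$: telescope pointwise, $\bigl(\sum_Q\mathbb E_Q(w)\mathbf 1_Q\bigr)^{p'}\le p'\sum_Q\mathbb E_Q(w)\mathbf 1_Q\bigl(\sum_{Q'\subseteq Q}\mathbb E_{Q'}(w)\mathbf 1_{Q'}\bigr)^{p'-1}$, use $p'-1\le1$ to pull the power inside, apply the per-cube bound $\mathbb E_{Q'}(w)^{p'-1}\mathbb E_{Q'}(\sigma)\le\llbracket w,\sigma\rrbracket_{A_p}^{p'}$, and finish with Lebesgue-measure sparseness. But for $1<p<2$ the concavity step fails, and no off-the-shelf embedding supplies the claim; this is precisely where the paper does its real work. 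The paper never proves your intermediate inequality: it runs a corona construction instead --- stopping cubes generated by the $w$-averages, the cubes of $\mathcal L$ stratified so that both the local $A_p$ ratio ($\simeq 2^{a}$) and the ratio $\mathbb E_Q w/\mathbb E_S w$ ($\simeq 2^{-b}$) are frozen, a distributional estimate (Lemma \ref{l.Ldistrib}) transferred from Lebesgue measure to $\sigma$ using that $\sigma(Q)\simeq\rho\lvert Q\rvert$ on each stratum --- and only then the $A_\infty$ packing, applied to the stopping cubes rather than to all of $\mathcal L$. To keep your two-step structure you would have to prove the boxed-off claim directly for $1<p<2$, which, as far as I can see, again requires the corona argument; otherwise you must couple the $A_\infty$ gain to a stopping family as the paper does. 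A smaller issue: the opening decomposition of a type-$L$ family into $O(\Lambda)$ sparse subfamilies is plausible but unproved, and the paper avoids it by using the exponential integrability in \eqref{e.Lambda} directly through the distributional estimate.
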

%%%%%%%%%%%%%%%%%%%%%%%%%%%%%% THEOREM THEOREM THEOREM

We take up the proof of this Proposition.  
Of the two estimates \eqref{e.typeLweak} and \eqref{e.typeLstrong},  it suffices to prove the weak-type result \eqref{e.typeLweak}. 
Indeed, it is the content of \eqref{e.strong1}, that the strong-type norm of $ T_{\mathcal L}$ is characterized by the maximum of the two weak-type norms, from $ L ^{p} ( \sigma ) $ to weak-$L ^{p} (w)$ and from $ L ^{p'} (w)$ to weak-$L ^{p'} (\sigma )$. In so doing, we should keep track of the   role of the measures and index $ p$.   Thus, 
\begin{align*}
\lVert T_{\mathcal L}  ( \cdot \sigma )\rVert_{ L ^{p} (\sigma ) \mapsto L ^{p } (w)} 
& \lesssim 
\lVert T_{\mathcal L} ( \cdot \sigma )\rVert_{ L ^{p} (\sigma ) \mapsto L ^{p,\infty  } (w)}
+
\lVert T_{\mathcal L} ( \cdot w  )\rVert_{ L ^{p'} (w ) \mapsto L ^{p',\infty  } (\sigma )}
\\
& \lesssim  \Lambda \bigl\{ 
\llbracket w, \sigma \rrbracket _{A_p}  \lVert \sigma \rVert_{A _{\infty }} ^{1/p} + 
\llbracket \sigma ,w \rrbracket _{A_{p'}}  \lVert w \rVert_{A _{\infty }} ^{1/p'}\bigr\}\,. 
\end{align*}
Then, \eqref{e.typeLstrong} follows as the two-weight $ A_p$ terms above are equal. 

In particular, it suffices to show that for any cube $ Q_0$, we have 
\begin{equation}\label{e.2show}
\Bigl\lVert  \sum_{ \substack{Q\in \mathcal L\\ Q \subset Q_0 }}  \mathbb E _{Q} f w  \cdot \mathbf 1_{Q} 
\Bigr\rVert_{ L ^{p'} (\sigma )} \lesssim  \Lambda 
\llbracket \sigma ,w \rrbracket _{A_{p'}}  \lVert w \rVert_{A _{\infty }} ^{1/p'}  w (Q_0) ^{1/p'}\,. 
\end{equation}

\medskip 

The steps below are the argument pioneered in \cite{MR2657437}, and have been used in \cite{1007.4330,1010.0755,1103.5229,1103.5562}.  
The details have not been presented before in the positive case, where they are much simpler. 
We make the definition of the stopping cubes.  

%%%%%%%%%%%%%%%%%%%%%%%%%%%%%%  DEFINITION DEFINITION DEFINITION
\begin{definition}\label{d.1stop} Let $ \mathcal D$ be a grid, $ w $ a weight.  Given a cube $ Q \in \mathcal D$, 
we set the \emph{stopping children of $ Q$, written $ \mathcal T (Q)$,} to be the maximal dyadic cubes $ Q'\subset Q$
for which $ w (Q')/\lvert  Q'\rvert> 4 w (Q)/\lvert  Q\rvert  $.    A basic property of this collection is that 
\begin{equation}\label{e.14}
\sum_{Q'\in \mathcal T(Q)} \lvert  Q'\rvert< \tfrac 1 4 \lvert  Q\rvert\,.   
\end{equation}

We set the \emph{stopping cubes of $ Q_0$} 
to be the collection $ \mathcal S = \bigcup _{j\ge 0} \mathcal S_j (Q)$, where we inductively define $ S_0 (Q_0) := \{Q_0\}$, 
and $ S _{j+1} (Q_0) = \bigcup _{Q'\in \mathcal S _{j} (Q_0)} \mathcal T (Q)$.  Thus, these are the maximal dyadic cubes, so that 
passing from parent to child in $ \mathcal S$, the average value of $ w $ is increasing by at least factor $ 4$.  
\end{definition}
%%%%%%%%%%%%%%%%%%%%%%%%%%%%%%  DEFINITION DEFINITION DEFINITION

We are free to assume that $ Q\subset Q_0$ for all $ Q\in \mathcal L$, where $ Q_0$ is as in \eqref{e.2show}.
Let us fix a non-negative  integer $ a$ with $ 2 ^{a} \le 2\llbracket w, \sigma \rrbracket _{A_p}  ^{p}$, and integer $ b \ge 0$, 
take $ \mathcal L _{a,b} (S)$ be those $ Q\in \mathcal L$ so that  (a) $ S \in \mathcal S (Q_0)$ is the smallest stopping cube 
which contains $ Q$; (b)  $ 2 ^{a-1} \le \frac {w (Q)} {\lvert  Q\rvert } \bigl[ \frac {\sigma (Q)} {\lvert  Q\rvert } \bigr] ^{p-1} < 2 ^{a}$; 
and (c) $ 2 ^{-b+1}  \mathbb E _{S} w\le \mathbb E _{Q} w \le 2 ^{-b+2} \mathbb E _{S} w$.  
Notice that $ a$ holds the $ A_p$ ratio fixed, and by definition of the stopping cubes, the sets $ \mathcal L _{a,b} (S)$ exhaust 
$ \mathcal L$ as the three quantities $ S \in \mathcal S (Q_0)$,  and integers $ a, b$ vary.

Setting $ \mathcal L _{a,b} = \bigcup _{S\in \mathcal S (Q_0)} \mathcal L _{a,b} (S)$, it holds that 
\begin{equation}\label{e.aTL}
\int _{Q_0} \Bigl\lvert  \sum_{\substack{Q\in \mathcal L_{a,b} \\ Q\subset Q_0 }}  \mathbf 1_{Q} \mathbb E _{Q} w \Bigr\rvert ^{p'} 
\; \sigma (dx) \lesssim   2 ^{- p' b} \lVert w\rVert_{A_ \infty }  2 ^{a (p'-1) }  w (Q_0)\,. 
\end{equation}
This is summed over $ 0\le a \le \lceil \log_2 \llbracket w, \sigma \rrbracket _{A_p} ^{p}\rceil +1$  and non-negative $ b$ to prove  \eqref{e.2show}.

The essence of the proof  of our claim \eqref{e.aTL} is this distributional estimate. 

%%%%%%%%%%%%%%%%%%%%%%%%%%%%%% LEMMA LEMMA LEMMA
\begin{lemma}\label{l.Ldistrib}  With the notations above we have these two distributional inequalities, universal 
 over  (1)  integers $ a,b,t$; and  (2) $ S\in \mathcal S (Q_0)$ and (3) measure $ \nu $ equal to Lebesgue measure or 
 $\sigma $
\begin{align}
 \label{e.Ldist2}
\nu  \Bigl( \Bigl\{ x\in S \;:\; \sum_{\substack{Q\in \mathcal L _{a,b} (S)  }} \mathbb E _{Q} w  \mathbf 1_{Q}
>  K  \Lambda  2 ^{-b} t \mathbb E _{S} w
\Bigr\} \Bigr) &\lesssim \operatorname e ^{-t} \nu  (S) \,. 
\end{align} 
Here, $ K$ is a  constant. 
\end{lemma}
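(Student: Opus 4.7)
The plan is to reduce the distributional estimate to a combinatorial counting statement for the collection $\mathcal L_{a,b}(S)$, to which the type-$L$ integrability \eqref{e.Lambda} will then directly apply. Set
\begin{equation*}
N(x) := \sum_{Q \in \mathcal L_{a,b}(S)} \mathbf 1_{Q}(x),
\end{equation*}
the number of cubes from the collection that contain $x$. Condition (c) in the definition of $\mathcal L_{a,b}(S)$ forces $\mathbb E_Q w \le 4\cdot 2^{-b}\mathbb E_S w$ for every such $Q$, so the sum under the measure bracket in \eqref{e.Ldist2} is pointwise at most $4\cdot 2^{-b}\mathbb E_S w\cdot N(x)$. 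It therefore suffices to produce an absolute constant $K$ for which
\begin{equation*}
\nu\bigl(\{x \in S : N(x) > K\Lambda t\}\bigr) \lesssim e^{-t}\,\nu(S), \qquad \nu \in \{\,|\cdot|,\,\sigma\,\},
\end{equation*}
and both cases of \eqref{e.Ldist2} follow.

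Let $\mathcal M$ be the pairwise disjoint family of maximal cubes of $\mathcal L_{a,b}(S)$ contained in $S$, so that $N$ vanishes off $\bigcup \mathcal M$ and, for each $Q \in \mathcal M$, $N|_Q = \sum_{Q' \in \mathcal L_{a,b}(S),\,Q' \subseteq Q}\mathbf 1_{Q'}$. Since $\mathcal L_{a,b}(S) \subseteq \mathcal L$, the type-$L$ hypothesis applied at $Q$ dominates $N|_Q$ by a function with exponential integrability $\mathbb E_Q \exp(\Lambda^{-1} N|_Q) \lesssim 1$, and a Markov inequality then yields $|\{x \in Q : N(x) > \Lambda t\}| \lesssim e^{-t}|Q|$. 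Summing over $\mathcal M$ disposes of the Lebesgue case.

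The substantive case is $\nu = \sigma$. The idea is to transfer the Lebesgue bound through a density comparability inherent to $\mathcal L_{a,b}(S)$: conditions (b) and (c) each pin a quantity ($\mathbb E_Q w$ and the full $A_p$ product) to a dyadic interval independent of the cube, and their quotient forces $\mathbb E_{Q''}\sigma \simeq \mathbb E_Q\sigma$, with implicit constant depending only on $p$, for all $Q, Q'' \in \mathcal L_{a,b}(S)$ with $Q'' \subseteq Q$. The key structural observation is that $N$ is constant on each atom cut out by the $\mathcal L_{a,b}(S)$-tree inside $Q$, so every level set $\{x \in Q : N(x) > \Lambda t\}$ is a disjoint union of cubes $\{Q''_j\}$ drawn from $\mathcal L_{a,b}(S)$ itself. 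Combining,
\begin{equation*}
\sigma\bigl(\{x\in Q : N(x)>\Lambda t\}\bigr)
= \sum_j \mathbb E_{Q''_j}\sigma \cdot |Q''_j|
\lesssim \mathbb E_Q\sigma\cdot\bigl|\{N>\Lambda t\}\cap Q\bigr|
\lesssim e^{-t}\sigma(Q),
\end{equation*}
where the last step is the Lebesgue bound of the previous paragraph. Summing over $\mathcal M$ closes the $\sigma$-case.

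The main obstacle is precisely this last structural alignment: without the fact that level sets of $N$ are unions of cubes drawn \emph{from $\mathcal L_{a,b}(S)$ itself}, on which $\sigma$-density is pinned to a common value, the Lebesgue-to-$\sigma$ transfer would demand a quantitative $A_\infty$ property for $\sigma$ that is not available in the hypotheses of this lemma. It is exactly the rigidity imposed by conditions (b) and (c) that converts the exponential Lebesgue decay from \eqref{e.Lambda} into exponential $\sigma$-decay with only a $p$-dependent constant loss.
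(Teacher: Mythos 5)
Your proposal is correct and follows essentially the same route as the paper: reduce via condition (c) to the counting function, get the Lebesgue bound from the type-$L$ exponential integrability plus Markov on the maximal cubes, and transfer to $\sigma$ by observing that the superlevel set is a union of cubes from $\mathcal L_{a,b}(S)$ itself, on which conditions (b) and (c) pin $\mathbb E_Q \sigma$ to a single value $\rho$ depending only on $S$ (up to a $p$-dependent constant). The paper's proof is just a terser version of the same argument.
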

%%%%%%%%%%%%%%%%%%%%%%%%%%%%%% LEMMA LEMMA LEMMA

%%%%%%%%%%%%%%%%%%%%%%%%%%%%%% PROOF PROOF PROOF
\begin{proof} Fix data of the Lemma, and note that it suffices to prove that for a maximal $ Q_0 \in \mathcal L _{a,b} (S)$, that 
the estimate above holds on the cube $Q_0$.  Then,   \eqref{e.Ldist2}, for $ \nu $ being Lebesgue measure, obviously reduces to 
\begin{equation*}
\Bigl\lvert \Bigl\{ x\in Q_0 \;:\; \sum_{Q\in \mathcal L _{a,b} (S)} \mathbf 1_{Q}
> K \Lambda  t 
\Bigr\} \Bigr\rvert \lesssim \operatorname e ^{-t} \lvert  Q_0\rvert 
\end{equation*}
which is an immediate consequence of $ \mathcal L$ being of type $ L$, see \eqref{e.Lambda}.  
So, we consider the case of $ \nu $ being $ \sigma $.  But note that with $ t>1$ fixed, the event 
\begin{equation*}   
 \Bigl\{ x\in Q_0\;:\; \sum_{Q\in \mathcal L _{a,b} (S)} \mathbb E _{Q} w  \mathbf 1_{Q}
> K \Lambda  t 2 ^{-b} \mathbb E _{S} w
\Bigr\}
\end{equation*}
is a union of disjoint cubes in collection $ \mathcal E_t \subset \mathcal L _{a,b}$. Now, for each $ Q\in \mathcal E_t$, we have held the $ A_p$ characteristic essentially constant, namely about $ 2 ^{a}$.  And $ \mathbb E _{Q} w$ is a held to be approximately $ 2 ^{-b} \mathbb E _{S} w$.  Thus, $ \sigma (Q)$ is, up to a fixed multiple, a constant times $ \lvert Q\rvert$.  Call this multiple $ \rho $, which is a function of the stopping cube $ S$, but the latter is held fixed. Hence, 
\begin{align*}
\sigma \Bigl\{ x\in S \;:\; \sum_{Q\in \mathcal L _{a,b} (S)} \mathbb E _{Q} w  \mathbf 1_{Q}
> K t 2 ^{-b} \mathbb E _{S} w
\Bigr\}
&= 
\sum_{Q\in \mathcal E_t} \sigma (Q) 
\\
& \simeq \rho \sum_{Q\in \mathcal E_t} \lvert Q\rvert 
\lesssim \rho \operatorname e ^{-t} \lvert  Q_0\rvert \simeq \operatorname e ^{-t} \sigma (Q_0)\,,  
\end{align*}
 Our proof is complete. 
\end{proof}
%%%%%%%%%%%%%%%%%%%%%%%%%%%%%% PROOF PROOF PROOF

We can now verify \eqref{e.aTL}.  
  We will write 
\begin{align*}
 \sum_{ \substack{Q\in \mathcal L_{a,b} }}  \mathbb E _{Q} w \cdot \mathbf 1_{Q} 
& \lesssim 
 \sum_{S\in \mathcal S(Q_0)}   2 ^{-b} \mathbb E _{S} w   U_S 
\end{align*}
where $ U_S := \sum_{Q\in \mathcal L _{a,b} (S) }   \mathbf 1_{Q}$. 
There is one more variable that is useful for us to hold essentially constant.
Define an event  $  E _{S,0} = \{    U_S  <  \Lambda 2 ^{n+1}\}$, and for $ n >0$, set 
 $ E _{S,n} = \{ 2 ^{n}  \Lambda \le  U_S  < 2 ^{n+1} \Lambda \} $.  Then, we have $ \sigma (E _{S,n}) \lesssim \operatorname e ^{ -c2 ^{n}} \sigma (S)$ for  $ c= c _{\mathcal L}>0$.  Moreover, we can estimate, using a familiar trick,
\begin{align*}
\Bigl[ \sum_{S\in \mathcal S(Q_0)}  \mathbb E _{S} w \cdot   U_S   \Bigr] ^{p'} 
&= 
\Bigl[ \sum_{n=0} ^{\infty } \sum_{S\in \mathcal S(Q_0)}  2 ^{-b - n/p' + n/p'} \mathbb E _{S} w \cdot   U_S   \mathbf 1_{E _{S,n}}  \Bigr] ^{p'} 
\\
& \lesssim \sum_{n=0} ^{\infty }  2 ^{n} 
\Bigl[ \sum_{S\in \mathcal S(Q_0)}  2 ^{-b} \mathbb E _{S} w \cdot   U_S   \mathbf 1_{E _{S,n}}  \Bigr] ^{p'} 
\\
& \lesssim 
 \sum_{n=0} ^{\infty } 2 ^{n} 
 \sum_{S\in \mathcal S(Q_0)}  \Bigl\lvert 2 ^{-b} \mathbb E _{S} w  \cdot  U_S   \mathbf 1_{E _{S,n}}  \Bigr\rvert ^{p'} \,. 
\end{align*}
The first line follows by an appropriate choice of H\"older's inequality, and the second as the sum, for each point $ x \in Q_0$ is a super-geometric series of numbers.  This is wasteful in  $ n$, but 
 by \eqref{e.Ldist2},  
\begin{align*}
 2 ^{n}\int _{E_{S,n}}  \Bigl\lvert  2 ^{-b} \mathbb E _{S} w  U_S    \Bigr\rvert ^{p'} \ \sigma (dx) 
& \lesssim \Lambda ^{p'} 2 ^{-b p' -n} \bigl[ \mathbb E _{S} w \bigr] ^{p'} \sigma (S)  
\\
& \lesssim L ^{p'} 2 ^{-b p' + a (p'-1) -n} w (S) \,. 
\end{align*}
The last line follows by trading out the $ A_p$ characteristic, which is approximately $ 2 ^{a}$ on these stopping cubes. 
We can of course trivially sum this in $ n\ge 0$.  
Now, the $ A_ \infty $ property is decisive.  
We employ the elementary property \eqref{e.14} to estimate 
\begin{align*}
\sum_{S\in \mathcal S(Q_0)} w (S)  &= \int _{Q_0} \sum_{S\in \mathcal S(Q_0)} \frac { w (S) } {\lvert  S\rvert }  \mathbf 1_{S }   \; dx 
\\
& \lesssim \int _{Q_0} M (w \mathbf 1_{Q_0})   \; dx  \le \lVert w\rVert_{A_ \infty } w (Q_0) \,. 
\end{align*}
 Combining estimates, we have  proved \eqref{e.aTL}.

%%%%%%%%%%%%%%%%%%%%%%%%%%%%%% SECTION  SECTION SECTION
%%%%%%%%%%%%%%%%%%%%%%%%%%%%%% SECTION  SECTION SECTION 
\section{Lerner's Median Inequality; Application to Haar Shift Operators} \label{s.lerner}

We recall definitions for the inequality from \cite{MR2721744}, which applies to a 
measurable function $ \phi $ on $ \mathbb R ^{d}$, and cube $ Q$.  A  median of $ \phi $ restricted to $ Q$, is  a possibly non-unique real number such that 
\begin{equation*}
\max\bigl\{ \lvert \{ x \in Q \;:\;  \phi (x)> m_ \phi  (Q) \}\rvert,\ 
\max\bigl\{ \lvert \{ x \in Q \;:\;  \phi (x)> m_ \phi  (Q) \}\rvert \}\rvert \bigr\} \le \tfrac 12 \lvert  Q\rvert.  
\end{equation*}
For parameter 
$ 0<\lambda <1$, we define an measure of oscillation of $ f$ to be 
\begin{equation}\label{e.w}
\omega _{\lambda } (\phi  ; Q) \coloneqq 
\inf _{c\in \mathbb R }  \bigl( (\phi - c )\mathbf 1_{Q}  \bigr) ^{\ast} (\lambda \lvert  Q\rvert ) \,. 
\end{equation}
Here, $ \phi ^{\ast} $ denotes the non-increasing rearrangement, so that if $ \phi $ is supported on $ Q$, 
$ \phi ^{\ast} (\lambda \lvert  Q\rvert )$ is the $ \lambda ^{\textup{th}}$  percentile of $ \phi $. 
The local sharp maximal function of $ f$ is 
\begin{equation}\label{e.localSharp}
\operatorname M ^{\sharp} _{\lambda; Q } \phi  (x) 
\coloneqq \sup _{Q'\subset Q} \mathbf 1_{Q'} \omega_{\lambda } (\phi , Q')\,. 
\end{equation}

%%%%%%%%%%%%%%%%%%%%%%%%%%%%%% THEOREM THEOREM THEOREM
\begin{theorem}[Lerner] \label{t.lerner} Let $ \phi $ be a measurable function on $ \mathbb R ^{d}$, and $ Q_0$ a cube. 
Then, there is a collection of cubes $ \{Q_j^\ell\}$ all dyadic cubes contained in $ Q_0$ so that 
%%  ENUMERATE
\begin{enumerate}
\item  We have the pointwise inequality 
\begin{equation}\label{e.le}
\lvert  \phi  (x) - m_ \phi  (Q_0)\rvert 
\lesssim 
\operatorname M _{1/4, Q_0} ^{\sharp} \phi  (x) 
+ \sum _{ \ell =1} ^{\infty } \sum _{j}  \omega_{2 ^{-d-2} } (f, \widehat Q_j^\ell ) 
\mathbf 1_{Q_j^\ell} (x) 
\end{equation}
where $ \widehat Q  $ is the parent of dyadic cube $ Q$. 

\item The cubes $ Q_j^\ell$ are disjoint in $ j$, with $ k$ fixed. 

\item Setting $ \Omega _{k} = \bigcup _{j} Q_j^\ell$, we have $ \Omega_{k+1} \subset \Omega_k$, 

\item  $ \lvert   Q_j^\ell \cap \Omega_{k+1}\rvert < \tfrac 12 \lvert  Q_j^\ell\rvert  $. 
\end{enumerate}
%% ENUMERATE
\end{theorem}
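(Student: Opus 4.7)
The plan is to construct the family $\{Q_j^\ell\}$ by a recursive Calder\'on--Zygmund stopping procedure and then derive (i) by telescoping medians along the resulting tree. At level $\ell=0$ I take $\{Q_0\}$. Given the collection at level $\ell$, for each $Q = Q_j^\ell$ I form the bad set
\[
E_Q := \bigl\{ x \in Q : |\phi(x) - m_\phi(Q)| > \omega_{2^{-d-2}}(\phi, Q) \bigr\},
\]
which by the very definition of $\omega_{2^{-d-2}}$ satisfies $|E_Q| \le 2^{-d-2}|Q|$. I declare the level-$(\ell+1)$ cubes inside $Q$ to be the maximal dyadic subcubes $Q' \subsetneq Q$ whose $E_Q$-density exceeds a threshold $\alpha$ calibrated between $2^{-d-1}$ and $1/2$. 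Properties (ii) and (iii) are immediate from the construction, and (iv) follows from the Calder\'on--Zygmund weak $(1,1)$ count $|\Omega_{\ell+1} \cap Q_j^\ell| \le \alpha^{-1}|E_{Q_j^\ell}| < \tfrac{1}{2}|Q_j^\ell|$.

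For (i), fix $x \in Q_0$ and let $Q_0 = Q_{j_0}^0 \supsetneq Q_{j_1}^1 \supsetneq \cdots \supsetneq Q_{j_N}^N$ be the maximal chain of stopping cubes containing $x$. Telescope
\[
\phi(x) - m_\phi(Q_0) = \bigl[\phi(x) - m_\phi(Q_{j_N}^N)\bigr] + \sum_{\ell=0}^{N-1} \bigl[m_\phi(Q_{j_{\ell+1}}^{\ell+1}) - m_\phi(Q_{j_\ell}^\ell)\bigr].
\]
Since $x$ lies in no cube below level $N$, every sufficiently small dyadic subcube $Q' \ni x$ of $Q_{j_N}^N$ fails the selection, so $|\phi(x) - m_\phi(Q_{j_N}^N)| \le \omega_{1/4}(\phi, Q')$ at a.e.\ $x$, and the supremum over such $Q'$ absorbs the tail into $\operatorname M_{1/4;Q_0}^\sharp \phi(x)$. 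For each middle term, non-selection of the dyadic parent $\widehat Q_{j_{\ell+1}}^{\ell+1}$ forces the good set of $Q_{j_\ell}^\ell$ to cover at least a $(1-\alpha)$-fraction of $\widehat Q_{j_{\ell+1}}^{\ell+1}$; the elementary fact that any constant lying within distance $r$ of $\phi$ on a strict majority of a cube lies within $r$ of any median then gives
\[
\bigl|m_\phi(Q_{j_{\ell+1}}^{\ell+1}) - m_\phi(Q_{j_\ell}^\ell)\bigr| \lesssim \omega_{2^{-d-2}}(\phi, \widehat Q_{j_{\ell+1}}^{\ell+1}).
\]
Re-indexing the sum by stopping cubes at levels $k \ge 1$ gives the right-hand side of (i).

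The main obstacle is reconciling the two versions of the oscillation: the construction most naturally produces bounds in terms of $\omega_{2^{-d-2}}(\phi, Q_{j_\ell}^\ell)$ on the stopping parent, whereas (i) asks for the oscillation on the dyadic parent $\widehat Q_{j_{\ell+1}}^{\ell+1}$ of the child. To pass between them one must verify that $m_\phi(Q_{j_\ell}^\ell)$ is admissible as a competitor in the infimum defining $\omega_{2^{-d-2}}(\phi, \widehat Q_{j_{\ell+1}}^{\ell+1})$, which requires the bad set to occupy at most a $2^{-d-2}$-fraction of $\widehat Q_{j_{\ell+1}}^{\ell+1}$. The natural selection threshold $\alpha$ is somewhat larger than $2^{-d-2}$, creating a tension with (iv); resolving this tension by choosing compatible values for the $\omega$-parameter and the threshold (tightening the definition of $E_Q$ and re-calibrating the selection level) is the only delicate step of the argument, and the remaining constant-tracking is routine.
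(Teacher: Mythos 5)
The paper does not prove this theorem; it is quoted from Lerner \cite{MR2721744}, so there is no in-paper argument to compare against. Your overall architecture --- a recursive Calder\'on--Zygmund stopping family plus a telescoping of medians down each branch of the tree --- is indeed the shape of Lerner's proof, and properties (2)--(4) would follow from such a construction in the way you indicate.

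The gap you flag at the end is genuine, and I would resist your characterization of it as ``the only delicate step'' to be fixed by ``re-calibrating.'' Your good-set majority argument controls the increment $\lvert m_\phi(Q_{j_{\ell+1}}^{\ell+1}) - m_\phi(Q_{j_\ell}^{\ell})\rvert$ by $\omega_{2^{-d-2}}(\phi, Q_{j_\ell}^{\ell})$, the oscillation over the \emph{stopping} parent; the displayed inequality then silently replaces this by $\omega_{2^{-d-2}}(\phi, \widehat{Q}_{j_{\ell+1}}^{\ell+1})$, the oscillation over the \emph{dyadic} parent of the selected child. These two cubes can be arbitrarily many dyadic generations apart, and changing the selection threshold $\alpha$ or the parameter in $\omega_\lambda$ does not change which cube the rearrangement is taken over. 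What your argument honestly yields, after re-indexing with $Q_{j_k}^{k}\subset Q_{j_{k-1}}^{k-1}$, is the bound with $\omega_{2^{-d-2}}(\phi,Q_j^\ell)\,\mathbf 1_{Q_j^\ell}$, i.e.\ the oscillation on the selected cube itself --- which is in fact how Lerner states the result. Producing the $\widehat Q_j^\ell$ form asserted here requires a different selection rule or an additional lemma tying the median of a selected cube to the oscillation over its own dyadic parent; it is not obtained from yours by bookkeeping. A second, smaller issue: in the tail term you assert $\lvert\phi(x)-m_\phi(Q_{j_N}^N)\rvert\le\omega_{1/4}(\phi,Q')$ for small $Q'\ni x$, but what non-selection and Lebesgue differentiation give you is a bound by $\omega_{2^{-d-2}}(\phi,Q_{j_N}^N)$, which involves the large stopping cube (not a small cube through $x$) and the parameter $2^{-d-2}\le 1/4$, so the monotonicity runs the wrong way to absorb it into $M^\sharp_{1/4,Q_0}$ as written.
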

%%%%%%%%%%%%%%%%%%%%%%%%%%%%%% THEOREM THEOREM THEOREM

The collection $ \mathcal L = \{ Q ^{\ell } _{j} \;:\; j, \ell \ge 1\}$ is a collection of dyadic intervals of type $ L$, with 
$ \Lambda  _{\mathcal L}  \simeq 1$.  We will say that $ \mathcal M \subset \mathcal L $ \emph{has generations separated by $ t$} if it is a 
subset of 
\begin{equation} \label{e.generations}
\{Q ^{\ell } _{j} \;:\; j\ge 1\,, \ell \equiv t' \mod t\}, \qquad  0\le t' < t \,. 
\end{equation}

We turn to the application of this inequality to Haar shift operators $ \mathbb S _{\natural} $.  
We will show that  
we can dominate $ \mathbb S_{\natural}$ by a sum of  dyadic positive operators of type $  L$. The sum has a number of terms in 
in controlled by complexity.   Then our technical Theorem~\ref{t.haarShift} 
follows from Proposition~\ref{t.forLerner}.   In order to control the measure of oscillations above, as it now standard 
in the subject, we appeal to a weak-$L^1$ estimate.

%%%%%%%%%%%%%%%%%%%%%%%%%%%%%% LEMMA LEMMA LEMMA
\begin{lemma}\label{l.Tweak11}
 Let $ \mathbb S $ be an $ L ^2 $ bounded Haar shift operator of  complexity $ \kappa $.  We then have 
$
\lVert \mathbb S _{\natural} f \rVert_{1, \infty } \lesssim \kappa \lVert f\rVert_{1}
$. 
\end{lemma}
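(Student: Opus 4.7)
The plan is to argue by the classical Calder\'on-Zygmund decomposition, with the complexity factor $\kappa$ emerging from a cancellation count specific to the Haar-shift structure.

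First I would fix $\lambda>0$ and form the standard dyadic Calder\'on-Zygmund decomposition of $f$ at level $\lambda$: extract the maximal dyadic cubes $\{Q_j\}$ on which the dyadic average of $\lvert f\rvert$ first exceeds $\lambda$, and write $f = g + \sum_j b_j$ where each $b_j := (f-\mathbb{E}_{Q_j}f)\mathbf 1_{Q_j}$ is mean-zero with $\|b_j\|_1\le 2\int_{Q_j}\lvert f\rvert$, while $\|g\|_\infty\lesssim\lambda$ and $\sum_j\lvert Q_j\rvert\lesssim\|f\|_1/\lambda$. For the \emph{good} part I would invoke $L^2$-boundedness of $\mathbb{S}_\natural$, which follows from the hypothesized $L^2$-boundedness of $\mathbb{S}$ via a standard Cotlar-type dominance $\mathbb{S}_\natural f\lesssim M(\mathbb{S}f)+Mf$; combining $\|g\|_2^2\le\lambda\|g\|_1\le\lambda\|f\|_1$ with Chebyshev then yields $\lvert\{\lvert\mathbb{S}_\natural g\rvert>\lambda\}\rvert\lesssim\|f\|_1/\lambda$.

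For the \emph{bad} part I set $\Omega^*=\bigcup_j Q_j$, so $\lvert\Omega^*\rvert\lesssim\|f\|_1/\lambda$. The heart of the matter is a cancellation statement that leaves only $O(\kappa)$ scales active: for $x\notin\Omega^*$, only dyadic ancestors $Q\supsetneq Q_j$ can contribute to $\mathbb{S}_\natural b_j(x)$, since each kernel $s_Q$ is supported in $Q\times Q$. Writing $s_Q(x,y)=\sum_{Q',R'}h_{Q'}^{R'}(x)\,h_{R'}^{Q'}(y)$, the $y$-factor $h_{R'}^{Q'}$ is constant on the children of $R'$, where $\ell(R')=2^{-n}\ell(Q)\ge 2^{-\kappa}\ell(Q)$. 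Whenever $\ell(Q)\ge 2^{\kappa+1}\ell(Q_j)$, the cube $Q_j$ lies strictly inside a single child of every relevant $R'$, so $h_{R'}^{Q'}$ is constant on $Q_j$ and the mean-zero condition $\int b_j=0$ annihilates the term. Only ancestors with $\ell(Q_j)\le\ell(Q)\le 2^\kappa\ell(Q_j)$ — at most $\kappa+1$ cubes — give a nonzero contribution, and for each such $Q$ the bound $\|s_Q\|_\infty\le 1$ shows that the $Q$-summand $\lvert Q\rvert^{-1}\int_Q s_Q(x,y)b_j(y)\,dy$ is dominated in absolute value by $\lvert Q\rvert^{-1}\|b_j\|_1\mathbf 1_Q(x)$, whose $L^1$-norm is at most $\|b_j\|_1$. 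Since $\mathbb{S}_\natural b_j$ is a supremum of partial sums of these at most $\kappa+1$ scalar terms, it is pointwise dominated by the sum of their absolute values; integrating and summing in $j$ gives $\int_{(\Omega^*)^c}\lvert\mathbb{S}_\natural b\rvert\lesssim\kappa\|f\|_1$, and a final application of Chebyshev completes the weak-$L^1$ estimate.

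The main obstacle will be the cancellation count — carefully verifying that only $O(\kappa)$ scales genuinely survive (including the generalized-Haar/paraproduct case, where $h_{R'}^{Q'}$ need not be orthogonal to constants but is still constant on the children of $R'$), and that the supremum over truncations is absorbed once the sum has been reduced to finitely many terms. A secondary but routine prerequisite is the $L^2$-boundedness of $\mathbb{S}_\natural$ itself used in the good-part step; this is standard via Cotlar or orthogonality in the pure-Haar case, but deserves to be stated explicitly.
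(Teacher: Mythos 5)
The paper itself gives no proof of this lemma --- it simply points to the standard Calder\'on--Zygmund argument in the literature --- and your proposal is a faithful reconstruction of that standard argument. Your treatment of the bad part, which is the heart of the matter, is correct: for $x\notin\Omega^\ast$ only ancestors $Q\supsetneq Q_j$ can contribute, and once $\ell(Q)\ge 2^{n+1}\ell(Q_j)$ one has $\ell(R')=2^{-n}\ell(Q)\ge 2\ell(Q_j)$, so each relevant $h^{Q'}_{R'}$ is constant on $Q_j$ (this uses only that a generalized Haar function is constant on the children of $R'$, not any cancellation of $h^{Q'}_{R'}$ itself) and is annihilated by $\int b_j=0$; at most $\kappa$ scales survive, each summand is dominated by $\lvert Q\rvert^{-1}\lVert b_j\rVert_1\mathbf 1_Q$, and this is exactly the source of the linear factor $\kappa$.

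The step you should not wave through is the good part. The pointwise Cotlar-type bound $\mathbb S_\natural f\lesssim M(\mathbb S f)+Mf$ is not an off-the-shelf fact for Haar shifts, and for shifts containing a generalized (non-cancellative) Haar function in the $x$-slot --- i.e.\ adjoint paraproducts, which Definition~\ref{d.haar} permits --- it is genuinely doubtful as stated: writing a truncation as a conditional expectation of $\mathbb S f$ plus a correction, the correction coming from such components is a Carleson-box quantity controlled by $M(\lvert f\rvert^2)^{1/2}$ rather than by $Mf$, so the $L^2$ bound for $\mathbb S_\natural$ in that case needs a separate (known, but not Cotlar-trivial) argument. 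In the cancellative case your claim can be replaced by something cleaner and sharper: if every $h^{R'}_{Q'}$ has mean zero, then since $h^{R'}_{Q'}$ lives on a cube of side $2^{-m}\ell(Q)$ and is constant on its children, the truncated sum $\sum_{\ell(Q)\ge 2^k}$ coincides \emph{exactly} with the conditional expectation of $\mathbb S f$ onto the dyadic cubes of side $2^{k-m-1}$; as the truncation in \eqref{e.Snat} depends only on the dyadic level of $\epsilon$, this gives $\mathbb S_\natural f\le M_{\mathrm{dyadic}}(\mathbb S f)$ pointwise, with no complexity loss, and the good-part estimate follows from the $L^2$ bound for $\mathbb S$ alone. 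That covers all cancellative shifts (hence the classical operators of this paper) and also the paraproduct as normalized here, whose $x$-side function is a true Haar function; for adjoint-paraproduct components you should either restrict the statement or import the known $L^2$ bound for their maximal truncations explicitly.
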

%%%%%%%%%%%%%%%%%%%%%%%%%%%%%% LEMMA LEMMA LEMMA

Aside from the linear bound in complexity, this is a standard argument;  details can be found in \cite{1007.4330}*{Proposition 5}.  
We need to make these comments on the estimation of the oscillation terms above for Haar shift operators.  
Recall that $  \mathbb S $ is a Haar shift operator of complexity $ \kappa $. Fixing cube $ Q$, and letting $ Q ^{(\kappa )}$ 
be the $ \kappa $-fold parent of $ Q$, it follows that if  measure $ g$ is \emph{not supported on $ Q ^{(\kappa )}$}, 
that the function $ \mathbb S _{\natural} (g)$ is constant on $Q$.   (Note that this is certainly not true for continuous 
Calder\'on-Zygmund operators,  so this proof seems to be limited to the dyadic setting.)   
Constants do not contribute to the measure of oscillation that we are concerned with, therefore, in estimating 
$  \omega_{2 ^{-d-2} } ( \mathbb S_{\natural} (f \sigma ),  Q  ) $, we can assume that $ f $ is supported on 
$ Q  ^{(\kappa)} $.    Moreover, in seeking to estimate this osciallatory term, we can group all the scales 
inside $ Q$, and appeal to the weak-type estimate.  For the $ \kappa $-scales above $ Q$, we use the size condition \eqref{e.normal}. 
From this, we see that 
\begin{align*}
\Biggl\lvert \Biggl\{x \in  Q   \;:\;  \mathbb S_{\natural} (f \mathbf 1_{ ( Q ) ^{(\kappa)}} \sigma ) 
\ge K \kappa  \mathbb E _{Q }  \lvert  f\rvert \sigma  
+ \sum_{t=1} ^{\kappa +1 } \mathbb E _{(Q) ^{(t)} } \lvert  f\rvert \sigma  
\Biggr\}\Biggr\rvert   
& \le 2 ^{-d-2} \lvert  Q\rvert 
\end{align*}
for a dimensional constant $ K$.  Hence, we conclude that 
\begin{equation*}
 \omega_{2 ^{-d-2} } (\mathbb S_{\natural} (f \sigma ), \widehat Q_j^\ell )
 \lesssim \kappa 
  \mathbb E _{Q } \lvert  f\rvert \sigma  
+ \sum_{t=1} ^{\kappa} \mathbb E _{(Q) ^{(t)} } \lvert  f\rvert \sigma  \,. 
\end{equation*}
From this, it follows that we have the following estimate on the local sharp maximal function, 
$
\operatorname M ^{\sharp} _{\lambda; Q } ( \mathbb S_{\natural} \sigma  f) 
\lesssim \kappa  M  \lvert   f\rvert \sigma 
$.

Now, let $ f$ be supported on a fixed dyadic cube $ Q_1$.  We apply Theorem~\ref{t.lerner} to $ \mathbb S_{\natural} (f \sigma)  $, restricted to 
a cube $ Q_0$, much larger than $ Q_1$. We can estimate 
\begin{align} 
\bigl\lvert \mathbb S _{\natural} (f \sigma )  - 
 \omega_{1/2} (\mathbb S_{\natural} ( f \sigma  ) ,Q_0)
\bigr\rvert 
& \lesssim   \label{e.lernerT<}
\operatorname M ^{\sharp} _{\lambda; Q } ( \mathbb S_{\natural} \sigma  f) 
+  \sum_{\ell=1} ^{\infty } \sum_{j}   \omega_{2 ^{-d-2} } ( \mathbb S_{\natural} (f \sigma ), \widehat Q_j^\ell )   \mathbf 1_{Q ^{\ell} _{j}} 
\end{align}
We have already seen that the local sharp function is bounded by $ \kappa M (f \sigma )$.   The structure of the Haar shift operator 
shows that 
\begin{align*}
 \omega_{2 ^{-d-2} } ( \mathbb S_{\natural} (f \sigma ), \widehat Q_j^\ell ) 
\lesssim  \kappa \mathbb E _{  \widehat Q_j^\ell} \sigma \lvert  f\rvert 
+ \sum_{t=2} ^{\kappa } \mathbb E _{(Q_j^\ell ) ^{(t)}} \sigma \lvert  f\rvert 
\end{align*}
We have again used Lemma~\ref{l.Tweak11}, for the first term on the right. 

We will show that 
\begin{equation} \label{e.LT<}
\Biggl\lVert  
 \sum_{\ell=1} ^{\infty } \sum_{j} \mathbb E _{(Q_j^\ell ) ^{(t)}} \sigma \lvert  f\rvert  \cdot \mathbf 1_{Q ^{\ell } _{j}} 
\Biggr\rVert_{L^p(w)} 
\lesssim C_{p,t}  
 \llbracket w, \sigma \rrbracket _{A_p} 
\bigl\{ 
\|  w \|_{ A _{\infty}}^{1/p'}+\| \sigma \|_{A_ \infty } ^{1/p}\bigr\} \lVert f\rVert_{L ^{p} (\sigma )} \,. 
\end{equation}
This shows that the right hand side of \eqref{e.lernerT<} is bounded with a norm estimate that  depends upon  complexity. 
(It will be exponential.) 
Assuming that $ f$ is compactly supported, and  taking  $ Q_0$ arbitrarily large, we can 
make $ m_ \phi  (Q_0)$ as small as we wish. So by Fatou Theorem, we will have finished the proof.

The maximal function obeys our estimate, see Theorem~\ref{t.max}, bringing our focus to the remaining terms in \eqref{e.lernerT<}.   
The main point is this: The  term in \eqref{e.LT<} is dominated by an 
operators of type $L$, with constant $ \Lambda _{L} \lesssim  2 ^{td}$.  From this, and \eqref{e.typeLstrong}, the required estimate \eqref{e.LT<} follows immediately.

Fix $1\le t \le \kappa +1$, and note the following.  Fix a cube $ R$, and consider $ \mathcal R$, the collection of  those 
$ Q ^{\ell} _{j} \in \mathcal L$ such that $ (Q_j^\ell) ^{(t)} = R$.  The collection $ \mathcal R$ consists of disjoint cubes. 
This means that below, we can work with a set of indices $ (\ell,j) \in \mathbb K $ with the defining property of $ \mathbb K $ being that 
for all pairs of integers $ Q ^{\ell } _{j} \in \mathcal M $ there is a unique $ ( \ell ',j') \in \mathbb K $ with  
$  (Q_j^\ell) ^{(t)}=  (Q _{j'} ^{\ell '}) ^{(t)}$.
We argue that this operator is  of type $ L$ with constant $ \Lambda _{L} \lesssim 2 ^{td}$.  
\begin{equation*}
\sum_{ (\ell,j) \in \mathbb K } \mathbb E _{  ( Q_j^\ell) ^{(t)} } \lvert  f\rvert \sigma    \cdot  \mathbf 1_{(Q ^{ \ell } _{j}) ^{(t)}}  
\end{equation*}

Indeed, for any cube $ R$, we have 
\begin{align*}
\Biggl\lVert  \sum_{\substack{ (\ell , j) \in \mathbb K \\ 
 (Q ^{\ell } _{j}) ^{(t)} \subset R
 }}  \mathbf 1_{( Q ^{\ell } _{j}) ^{(t)}} \Biggr\rVert_{1}
& \le  2 ^{td}
\Biggl\lVert  \sum_{\substack{ (\ell , j) \in \mathbb K \\ 
 Q ^{\ell } _{j}\subset R
 }}  \mathbf 1_{ Q ^{\ell } _{j}} \Biggr\rVert_{1}
 \lesssim 2 ^{td} \lvert  R\rvert 
 \end{align*}
by property (4) of Theorem~\ref{t.lerner}.  This estimate is uniform in $ R$, hence, by a well-known John-Nirenberg arugment, it shows that $ \Lambda _{L} \lesssim 2 ^{td}$, 
completing our proof.

%%%%%%%%%%%%%%%%%%%%%%%%%%%%%% REMARK REMARK REMARK
\begin{remark}\label{r.other} 
Our approach gives a proof of the weak-type estimate \eqref{e.zz} from Lerner's inequality, one of the few inequalities 
missing from the papers \cite{1001.4254,MR2628851}.
One should note that the weak-type inequality 
for the dyadic positive operators is quite easy. See \cite{0911.3437,MR719674}.  
\end{remark}
%%%%%%%%%%%%%%%%%%%%%%%%%%%%%% REMARK REMARK REMARK       

%%%%%%%%%%%%%%%%%%%%%%%%%%%%%% SECTION  SECTION SECTION
%%%%%%%%%%%%%%%%%%%%%%%%%%%%%% SECTION  SECTION SECTION 
\section{Concluding Remarks} %\label{s.}

Our motivation for writing this  paper is to present positive evidence for this conjecture. 

%%%%%%%%%%%%%%%%%%%%%%%%%%%%%% CONJECTURE CONJECTURE CONJECTURE
\begin{conjecture}\label{j.ApAzI} 
For $  T $ an $ L ^{2} (\mathbb R ^{d})$ bounded  Calder\'on-Zygmund Operator and $1<p<\infty$,  
and $ w \in A_p$ it holds that 
\begin{align}
\lVert T _{\natural } f  \rVert _{ L ^{p} (w)} &\le C_{T,p} \|  w \|_{ A _{p}}^{1/p} 
\max \bigl\{ 
\|  w \|_{ A _{\infty}}^{1/p'}, \lVert w ^{- p'+1} \rVert_{A _{\infty } }  ^{1/p}  \bigr\} 
\lVert  f \rVert _{L ^{p} (w)}\,.
\end{align} 
\end{conjecture}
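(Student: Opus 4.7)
The plan is to invoke Hyt\"onen's random dyadic representation theorem, which writes any $L^2$-bounded Calder\'on-Zygmund operator as
\begin{equation*}
\langle T f, g \rangle = c_T \, \mathbb{E}_\omega \sum_{m,n \geq 0} 2^{-\alpha \max(m,n)} \langle \mathbb{S}^{m,n}_\omega f, g \rangle,
\end{equation*}
where each $\mathbb{S}^{m,n}_\omega$ is an $L^2$-bounded Haar shift of complexity type $(m,n)$ on a random grid indexed by $\omega$, and $\alpha>0$ depends on kernel smoothness. Theorem~\ref{t.haarShift} already provides the target two-weight $A_p$-$A_\infty$ bound for each $\mathbb{S}^{m,n}_\omega$, but only with a constant exponential in $\kappa = \max(m,n)$. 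If this bound were upgraded to be polynomial in $\kappa$, then the geometric decay $2^{-\alpha \kappa}$ would absorb the loss: Minkowski's inequality, the $\omega$-independence of the $A_p$ and $A_\infty$ characteristics, and summation of a convergent series would yield the conjecture at once. The maximal truncations $T_\natural$ would then be handled exactly as in \S\ref{s.haar}, dominating $T_\natural f$ by $Mf$ plus an $\omega$-average of $\mathbb{S}^\omega_\natural f$ and invoking Theorem~\ref{t.max}.

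Accordingly, the real work is a quantitative refinement of Theorem~\ref{t.haarShift} from exponential to polynomial dependence on complexity. I would first pinpoint where the exponential loss enters the current proof: it arises at the close of \S\ref{s.lerner}, where the oscillation terms are dominated by a sum over $1 \le t \le \kappa+1$ of positive dyadic operators whose type-$L$ constant is only controlled by $\Lambda_{\mathcal L} \lesssim 2^{td}$. The naive union bound over ancestor levels $t$ forces the overall constant to be exponential. The goal would be to treat all $\kappa$ ancestor levels as a single type-$L$ (or mildly generalized) operator whose constant in \eqref{e.Lambda} is polynomial in $\kappa$, thereby reducing the entire truncated Haar-shift estimate to a single application of Proposition~\ref{t.forLerner} rather than $\kappa$ separate applications.

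The principal obstacle is exactly this John-Nirenberg-type step: proving that the ancestor-enlarged collection $\{(Q_j^\ell)^{(t)} : 1 \le t \le \kappa,\, (\ell,j) \in \mathcal L\}$ satisfies \eqref{e.Lambda} with $\Lambda \lesssim \kappa^C$ in place of $2^{\kappa d}$. The sparsity of Lerner's stopping cubes furnished by property (4) of Theorem~\ref{t.lerner} gives strong geometric control of the $Q_j^\ell$ themselves, but this control degrades badly under the $\kappa$-fold parent operation, since many distinct $Q_j^\ell$ can share a common ancestor. Recovering polynomial growth will likely require a new structural input: perhaps a Bellman-function argument tracking the joint exponential integrability across scales, or a martingale decomposition exploiting cancellation between different ancestor levels. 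Absent such a refinement, a parallel approach would abandon Lerner's inequality entirely and run a direct corona and testing-condition analysis of $\mathbb{S}_\natural$ in the spirit of \cite{1007.4330,1103.5229}, feeding the strong-type characterization of Theorem~\ref{t.dyadicStrong} into each corona piece while tracking the $\kappa$ dependence throughout; this alternative sidesteps the ancestor enlargement but introduces its own bookkeeping challenges in handling the maximal truncation across all scales simultaneously.
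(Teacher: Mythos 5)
Note first that this statement is labelled a \emph{conjecture} in the paper: the paper explicitly does not prove it, and the concluding section is devoted to explaining why the present methods fall short. Read as a strategy sketch rather than a proof, your proposal is in close agreement with the paper's own remarks. You correctly reduce the conjecture, via Hyt\"onen's random dyadic representation theorem, to a polynomial-in-complexity refinement of Theorem~\ref{t.haarShift}; the paper states exactly this, observing that such a refinement would immediately extend Theorem~\ref{t.classical} to all continuous Calder\'on--Zygmund operators. You also correctly locate the source of the exponential loss: after Lerner's median inequality, the $t$-fold ancestor enlargement of the sparse collection has type-$L$ constant $\Lambda_L\lesssim 2^{td}$, and summing over $1\le t\le\kappa$ forces $2^{\kappa d}$. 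Your fallback route, a direct corona-and-testing analysis in the style of \cite{1103.5229}, is also the paper's second suggested direction, and the paper even pinpoints the obstruction there as the non-standard testing condition, whose contribution is exactly the spoiler term $\|w\|_{A_p}^{1/(p-1)}$ in \eqref{e.ZZ}.

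The genuine gap, acknowledged both by you and by the paper, is the step that would repair the exponential loss, and the one concrete device you put forward does not survive scrutiny. Treating all $\kappa$ ancestor levels as a single type-$L$ collection with $\Lambda\lesssim\kappa^C$ is not possible as stated: already for a single fixed level $t$, the collection $\{(Q_j^\ell)^{(t)}\}$ has type-$L$ constant comparable to $2^{td}$, because each $t$-fold parent is $2^{td}$ times larger than the sparse cube beneath it, so the exponential integrability in \eqref{e.Lambda} degrades geometrically in $t$ no matter how the levels are grouped; and there is no cancellation available inside a positive dyadic operator to offset this growth. The Bellman-function and martingale ideas you gesture at would have to do something qualitatively new, and you offer no mechanism. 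A secondary technical caveat: passing to the maximal truncation $T_\natural$ under the full random representation theorem involves an infinite sum over complexity types $(m,n)$, so the argument of \S\ref{s.haar}, which treats truncations for operators in the convex hull of shifts of \emph{bounded} complexity, does not carry over verbatim and would need additional care.
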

%%%%%%%%%%%%%%%%%%%%%%%%%%%%%% CONJECTURE CONJECTURE CONJECTURE

Currently, this is known for the un-truncated operator and  $ p=2$, \cite{1103.5562}. 
For the definition of a Calder\'on-Zygmund operator, we refer the reader to  \emph{op. cit.} 
An attractive part of this conjecture, pointed out to the author by Tuomas Hyt\"onen, is that for canonical examples 
of $ T$, it is a standard part of the subject to have a lower bound on $ \lVert T\rVert_{ L ^{p} (w) \mapsto L ^{p } (w)}$ 
of $ \lVert w\rVert_{A_p} ^{1/p}$.  Thus, the form of the estimate above  quantifies the $ A _{\infty }$ contribution to the norm.  

The corresponding weak-type result is contained in \cite{1103.5229}*{Theorem 12.3}, recalled here as it 
 seems to be the strongest known estimates, in the case of $ p\neq 2$. 

%%%%%%%%%%%%%%%%%%%%%%%%%%%%%% THEOREM THEOREM THEOREM
\begin{theorem}\label{t.CZopAinfty}
For $  T $ an $ L ^{2} (\mathbb R ^{d})$ bounded  Calder\'on-Zygmund Operator and $1<p<\infty$, 
\begin{align} \label{e.zz}
\lVert T_{\natural } f  \rVert _{ L ^{p,\infty } (w)} &\le C_{T,p} \|  w \|_{ A _{p}}^{1/p} \|  w \|_{ A _{\infty}}^{1/p'} \lVert  f \rVert _{L ^{p} (w)},\\   \label{e.ZZ}
\lVert T _{\natural} f  \rVert _{ L ^{p } (w)} &\le C_{T,p}\big( \|  w \|_{ A _{p}}^{1/p} \|  w \|_{ A _{\infty}}^{1/p'}+\|w\|_{A_p}^{1/(p-1)}\big) \lVert  f \rVert _{L ^{p} (w)}.
\end{align}
\end{theorem}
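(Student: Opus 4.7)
My plan is to reduce the general Calder\'on-Zygmund case to Haar shifts via a representation theorem and then to combine the sparse-domination machinery of \S\ref{s.lerner} with the positive-operator bounds of Proposition~\ref{t.forLerner}, taking care to preserve a polynomial (rather than exponential) dependence on complexity. First, I would invoke a Hyt\"onen-style representation
\begin{equation*}
\langle Tf, g\rangle = \int_{\Omega} \sum_{\kappa\ge 0} c_\kappa(\omega)\,\langle \mathbb{S}^{\omega}_\kappa f, g\rangle\; d\mu(\omega)
\end{equation*}
of $T$ as an average over random dyadic grids $\omega$ of Haar shifts $\mathbb{S}^{\omega}_\kappa$ of complexity $\kappa$, with $|c_\kappa(\omega)| \lesssim \tau^{-\kappa}$ for some $\tau>1$. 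It then suffices to prove \eqref{e.zz} and \eqref{e.ZZ} for a single Haar shift $\mathbb{S}_\natural$ of complexity $\kappa$ with operator norms growing only polynomially in $\kappa$; integrating against $\mu$ then yields the stated bounds for $T_\natural$.

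The technical core is a pointwise sparse domination
\begin{equation*}
|\mathbb{S}_\natural(\sigma f)(x)| \lesssim \kappa\,T_{\mathcal L}(\sigma |f|)(x) + \kappa\,M(\sigma|f|)(x),
\end{equation*}
for some type-$L$ collection $\mathcal L = \mathcal L(f)$ with $\Lambda_{\mathcal L}\simeq 1$. I would construct this via the Lerner median inequality as in \S\ref{s.lerner}, but in place of splitting the contribution of the $\kappa$ ancestor scales of each $Q\in\mathcal L$ into separate type-$L$ operators---which is what produced the loss $2^{td}$ in \eqref{e.LT<} and hence the exponential-in-$\kappa$ estimate underlying Theorem~\ref{t.haarShift}---I would estimate all those scales together through a single application of the weak-$(1,1)$ bound of Lemma~\ref{l.Tweak11}. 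This costs only a linear factor $\kappa$ in the resulting sparse dominator, which is sufficient for summability against $c_\kappa(\omega)$.

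Once this sparse domination is in hand, \eqref{e.zz} follows by applying \eqref{e.typeLweak} of Proposition~\ref{t.forLerner} to $T_{\mathcal L}$ and Theorem~\ref{t.max} to $M$, specialized to $\sigma=w^{1-p'}$ so that $\llbracket w,\sigma\rrbracket_{A_p} = \|w\|_{A_p}^{1/p}$. The strong-type bound \eqref{e.ZZ} follows from \eqref{e.typeLstrong}, which with the same choice of $\sigma$ produces
\begin{equation*}
\|w\|_{A_p}^{1/p}\,\max\bigl\{\|\sigma\|_{A_\infty}^{1/p},\, \|w\|_{A_\infty}^{1/p'}\bigr\}.
\end{equation*}
A short duality computation (using $\sigma^{1-p}=w$) gives $\|\sigma\|_{A_\infty}\le c\,\|\sigma\|_{A_{p'}} = c\,\|w\|_{A_p}^{1/(p-1)}$, so that the first option inside the max contributes $\|w\|_{A_p}^{1/p}\cdot\|w\|_{A_p}^{1/(p(p-1))} = \|w\|_{A_p}^{1/(p-1)}$, and $\max\{A,B\}\le A+B$ recovers precisely the additive form of \eqref{e.ZZ}.

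The principal obstacle is the polynomial-in-complexity sparse domination of the second paragraph. The exponential loss $2^{td}$ in \S\ref{s.lerner} arises because the argument treats the $\kappa$ ancestor scales one at a time; this is fatal when combined with the merely geometric decay of the Hyt\"onen coefficients $c_\kappa(\omega)$, and is the reason the earlier parts of the paper only apply to classical operators of bounded complexity. Replacing the scale-by-scale decomposition with a single weak-$(1,1)$ estimate covering the $\kappa$-fold ancestor is the critical technical step; once this is secured, the rest of the proof is a direct combination of Proposition~\ref{t.forLerner} with the $A_p$--$A_{p'}$ duality identity $\|w^{1-p'}\|_{A_{p'}} = \|w\|_{A_p}^{1/(p-1)}$.
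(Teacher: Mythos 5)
The crux of your proposal---that the sparse domination can be made linear in the complexity $\kappa$ by handling all $\kappa$ ancestor scales ``through a single application of the weak-$(1,1)$ bound''---is precisely the point this paper identifies as an open question (see the discussion following Theorem~\ref{t.haarShift}), and your fix does not work. In estimating $\omega_{2^{-d-2}}(\mathbb S_{\natural}(f\sigma),Q)$ one may indeed assume $f$ is supported on $Q^{(\kappa)}$; but if you feed all of $f\mathbf 1_{Q^{(\kappa)}}\sigma$ into Lemma~\ref{l.Tweak11} and demand an exceptional set of measure at most $2^{-d-2}\lvert Q\rvert$, the admissible level is $\lambda \gtrsim \kappa \lVert f\mathbf 1_{Q^{(\kappa)}}\sigma\rVert_1/\lvert Q\rvert = \kappa\,2^{\kappa d}\,\mathbb E_{Q^{(\kappa)}}(\lvert f\rvert\sigma)$: the volume ratio $\lvert Q^{(\kappa)}\rvert/\lvert Q\rvert=2^{\kappa d}$ reinstates the exponential factor, so nothing is gained over the paper's route (size condition \eqref{e.normal} for the scales above $Q$, and then the $2^{td}$ Carleson loss in \eqref{e.LT<}). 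Nor can the ancestor averages be absorbed pointwise into $\kappa M(\sigma\lvert f\rvert)$ as your displayed domination asserts: each term $\mathbb E_{(Q_j^\ell)^{(t)}}(\sigma\lvert f\rvert)$ is $\le \inf_{Q_j^\ell}M(\sigma\lvert f\rvert)$, but the cubes $Q_j^\ell$ are nested across generations $\ell$ with unbounded overlap, so the honest dominator is $\kappa\sum_{\ell,j}\inf_{Q_j^\ell}M(\sigma\lvert f\rvert)\,\mathbf 1_{Q_j^\ell}$, which is not of the form $\kappa T_{\mathcal L}(\sigma\lvert f\rvert)+\kappa M(\sigma\lvert f\rvert)$. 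A further warning sign: were your domination correct, combining it with \eqref{e.typeLstrong} and the representation theorem would prove Conjecture~\ref{j.ApAzI} for all Calder\'on-Zygmund operators, which the paper states is open; the needed polynomial-in-complexity control is not a one-line adjustment of \S\ref{s.lerner}.

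Separately, note that this theorem is not proved in the present paper at all: it is recalled from \cite{1103.5229}*{Theorem 12.3}, where the argument runs through a two-weight testing theorem for Haar shifts with bounds linear in complexity, and the extra term $\lVert w\rVert_{A_p}^{1/(p-1)}$ in \eqref{e.ZZ} arises exactly from the non-standard testing condition there---not from Lerner's median inequality. Your final bookkeeping (the identity $\lVert \sigma\rVert_{A_{p'}}=\lVert w\rVert_{A_p}^{1/(p-1)}$, the bound $\lVert\sigma\rVert_{A_\infty}\lesssim\lVert\sigma\rVert_{A_{p'}}$, and $\max\{A,B\}\le A+B$) is correct as far as it goes, but it rests entirely on the unsubstantiated polynomial-complexity sparse domination, so the proof as proposed has a genuine gap.
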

%%%%%%%%%%%%%%%%%%%%%%%%%%%%%% THEOREM THEOREM THEOREM

It is a curious remark that the extrapolation estimates in \cite{1103.5562} do not give better than the estimate above, 
despite extrapolating from the sharp $ L ^2 $ estimate.   Likewise, our Main Theorem cannot be proved by the 
same elementary arguments from \cite{MR2628851}.

What is required to give a proof of the conjecture? In the singular integral case, we do not have a principle matching that of 
Sawyer's observation in the positive operator case that the strong type norm is dominated by the maximum of two weak type norms. 
Instead, the best results in the two weight case are contained in \cite{0807.0246,0911.3920,1103.5229}. 
The cleanest and simplest argument in 
\cite{1103.5229}*{Theorem 4.7}, which proves a general two-weight estimate for Haar shift operators that is 
linear in complexity.   It is  is largely satisfying, except for the presence of the `non-standard' testing 
condition (4.8).  Indeed, the contribution of the non-standard testing condition to the estimate in \eqref{e.ZZ} 
is the   term $ \|w\|_{A_p}^{1/(p-1)}$ by which we miss the conjecture.  
Any essential strengthening of this Theorem would be interesting, and a potential step towards 
proving the conjecture above.

%%%%%%%%%%%%%%%%%%%%%%%%   references 
\begin{bibsection}
\begin{biblist}

\bib{MR1124164}{article}{
  author={Buckley, Stephen M.},
  title={Estimates for operator norms on weighted spaces and reverse Jensen inequalities},
  journal={Trans. Amer. Math. Soc.},
  volume={340},
  date={1993},
  number={1},
  pages={253--272},
  issn={0002-9947},
}

\bib{MR2628851}{article}{
  author={Cruz-Uribe, David},
  author={Martell, Jos{\'e} Mar{\'{\i }}a},
  author={P{\'e}rez, Carlos},
  title={Sharp weighted estimates for approximating dyadic operators},
  journal={Electron. Res. Announc. Math. Sci.},
  volume={17},
  date={2010},
  pages={12--19},
  issn={1935-9179},
}

\bib{1001.4254}{article}{
   author={Cruz-Uribe, David},
   author={Martell, Jos{\'e} Mar{\'{\i}}a},
   author={P{\'e}rez, Carlos},
   title={Sharp weighted estimates for classical operators},
   date={2010},
      eprint={http://arxiv.org/abs/1001.4254},
}

\bib{MR2281449}{article}{
   author={Dragi{\v{c}}evi{\'c}, Oliver},
   author={Petermichl, Stefanie},
   author={Volberg, Alexander},
   title={A rotation method which gives linear $L^p$ estimates for powers
   of the Ahlfors-Beurling operator},
   language={English, with English and French summaries},
   journal={J. Math. Pures Appl. (9)},
   volume={86},
   date={2006},
   number={6},
   pages={492--509},
   issn={0021-7824},
   review={\MR{2281449 (2007k:30074)}},
  %doi={10.1016/j.matpur.2006.10.005},
}

\bib{MR1992955}{article}{
   author={Dragi{\v{c}}evi{\'c}, Oliver},
   author={Volberg, Alexander},
   title={Sharp estimate of the Ahlfors-Beurling operator via averaging
   martingale transforms},
   journal={Michigan Math. J.},
   volume={51},
   date={2003},
   number={2},
   pages={415--435},
   issn={0026-2285},
   review={\MR{1992955 (2004c:42030)}},
  %doi={10.1307/mmj/1060013205},
}

\bib{MR727244}{article}{
  author={Hru{\v {s}}{\v {c}}ev, Sergei V.},
  title={A description of weights satisfying the {$A_{\infty }$} condition of {M}uckenhoupt},
  journal={Proc. Amer. Math. Soc.},
  fjournal={Proceedings of the American Mathematical Society},
  volume={90},
  year={1984},
  number={2},
  pages={253--257},
  issn={0002-9939},
  coden={PAMYAR},
  mrclass={42B30},
  review={\MR{727244 (85k:42049)}},
  mrreviewer={R. Anantharaman},
  %doi={10.2307/2045350},
  url={http://dx.doi.org/10.2307/2045350},
}

\bib{MR0312139}{article}{
  author={Hunt, Richard},
  author={Muckenhoupt, Benjamin},
  author={Wheeden, Richard},
  title={Weighted norm inequalities for the conjugate function and Hilbert transform},
  journal={Trans. Amer. Math. Soc.},
  volume={176},
  date={1973},
  pages={227--251},
  issn={0002-9947},
}

\bib{MR2464252}{article}{
   author={Hyt{\"o}nen, Tuomas},
   title={On Petermichl's dyadic shift and the Hilbert transform},
   language={English, with English and French summaries},
   journal={C. R. Math. Acad. Sci. Paris},
   volume={346},
   date={2008},
   number={21-22},
   pages={1133--1136},
   issn={1631-073X},
   review={\MR{2464252 (2010e:42012)}},
  %doi={10.1016/j.crma.2008.09.021},
}

\bib{1007.4330}{article}{
  author={Hyt\"onen, Tuomas},
  title={The sharp weighted bound for general Calderon-Zygmund operators},
  eprint={http://arxiv.org/abs/1007.4330},
  date={2010},
}

\bib{1103.5229}{article}{
  author={Hyt\"onen, Tuomas},
  author={Lacey, Michael T.},
  author={Martikainen, Henri}, 
  author={Orponen, Tuomas}, 
  author={Reguera, Maria Carmen},
  author={Sawyer, Eric T.},
  author={Uriarte-Tuero, Ignacio},
  title={Weak And Strong Type Estimates for Maximal Truncations of Calder—n-Zygmund Operators on $ A_p$ Weighted Spaces},
  eprint={http://www.arxiv.org/abs/1103.5229},
  date={2011},
}

\bib{1103.5562}{article}{
  author={Hyt\"onen, T.},
  author={P\'erez, C.},
  title={Sharp weighted bounds involving $A_\infty $},
  eprint={http://www.arxiv.org/abs/1103.5562},
  date={2011},
}

\bib{1010.0755}{article}{
  author={Hyt\"onen, T.},
  author={P{\'e}rez, Carlos},
  author={Treil, S.},
  author={Volberg, A.},
  title={Sharp weighted estimates of the dyadic shifts and $A_2$ conjecture},
  journal={ArXiv e-prints},
  date={2010}, 
  eprint={http://arxiv.org/abs/1010.0755},
}

\bib{MR2657437}{article}{
  author={Lacey, Michael T.},
  author={Petermichl, Stefanie},
  author={Reguera, Maria Carmen},
  title={Sharp $A_2$ inequality for Haar shift operators},
  journal={Math. Ann.},
  volume={348},
  date={2010},
  number={1},
  pages={127--141},
  issn={0025-5831},
}

\bib{0911.3437}{article}{
  author={Lacey, Michael T.},
  author={Sawyer, Eric T.},
  author={Uriarte-Tuero, Ignacio},
  title={Two Weight Inequalities for Discrete Positive Operators},
  date={2009},
  journal={Submitted},
  eprint={http://www.arxiv.org/abs/0911.3437},
}

\bib{0807.0246}{article}{
  author={Lacey, Michael T.},
  author={Sawyer, Eric T.},
  author={Uriarte-Tuero, Ignacio},
  title={A characterization of two weight norm inequalities for maximal singular integrals with one doubling measure},
  date={2008},
  journal={ A\&PDE, to appear},
  eprint={http://arxiv.org/abs/0805.0246},
}

\bib{0911.3920}{article}{
  author={Lacey, Michael T.},
  author={Sawyer, Eric T.},
  author={Uriarte-Tuero, Ignacio},
  title={Two Weight Inequalities for Maximal Truncations of Dyadic Calder\'on-Zygmund Operators},
  date={2009},
  journal={Submitted},
  eprint={http://www.arxiv.org/abs/0911.3920},
}

\bib{MR2721744}{article}{
   author={Lerner, Andrei K.},
   title={A pointwise estimate for the local sharp maximal function with
   applications to singular integrals},
   journal={Bull. Lond. Math. Soc.},
   volume={42},
   date={2010},
   number={5},
   pages={843--856},
   issn={0024-6093},
   review={\MR{2721744}},
  %doi={10.1112/blms/bdq042},
}

%\bib{1005.1422}{article}{
%  author={Lerner, Andrei K.},
%  title={Sharp weighted norm inequalities for Littlewood-Paley operators and singular integrals},
%  date={2010},
%  eprint={http://arxiv.org/abs/1005.1422},
%}

\bib{lerner}{article}{
  author={Lerner, Andrei K.},
  title={On some weighted norm inequalities for Littlewood-Paley operators},
  journal={Illinois J. Math.},
  volume={52},
  date={2007},
  number={2},
  pages={653--666},
}

\bib{MR1756958}{article}{
   author={Petermichl, Stefanie},
   title={Dyadic shifts and a logarithmic estimate for Hankel operators with
   matrix symbol},
   language={English, with English and French summaries},
   journal={C. R. Acad. Sci. Paris S\'er. I Math.},
   volume={330},
   date={2000},
   number={6},
   pages={455--460},
   issn={0764-4442},
   review={\MR{1756958 (2000m:42016)}},
}

\bib{MR2354322}{article}{
  author={Petermichl, Stefanie},
  title={The sharp bound for the Hilbert transform on weighted Lebesgue spaces in terms of the classical $A\sb p$ characteristic},
  journal={Amer. J. Math.},
  volume={129},
  date={2007},
  number={5},
  pages={1355--1375},
  issn={0002-9327},
}

\bib{MR1964822}{article}{
   author={Petermichl, S.},
   author={Treil, S.},
   author={Volberg, A.},
   title={Why the Riesz transforms are averages of the dyadic shifts?},
   booktitle={Proceedings of the 6th International Conference on Harmonic
   Analysis and Partial Differential Equations (El Escorial, 2000)},
   journal={Publ. Mat.},
   date={2002},
   number={Vol. Extra},
   pages={209--228},
   issn={0214-1493},
   review={\MR{1964822 (2003m:42028)}},
}

\bib{MR676801}{article}{
   author={Sawyer, Eric T.},
   title={A characterization of a two-weight norm inequality for maximal
   operators},
   journal={Studia Math.},
   volume={75},
   date={1982},
   number={1},
   pages={1--11},
   issn={0039-3223},
   review={\MR{676801 (84i:42032)}},
}

\bib{MR719674}{article}{
  author={Sawyer, Eric},
  title={A two weight weak type inequality for fractional integrals},
  journal={Trans. Amer. Math. Soc.},
  volume={281},
  date={1984},
  number={1},
  pages={339--345},
  issn={0002-9947},
}

\bib{MR930072}{article}{
  author={Sawyer, Eric T.},
  title={A characterization of two weight norm inequalities for fractional and Poisson integrals},
  journal={Trans. Amer. Math. Soc.},
  volume={308},
  date={1988},
  number={2},
  pages={533--545},
  issn={0002-9947},
}

\bib{MR883661}{article}{
  author={Wilson, J. Michael},
  title={Weighted inequalities for the dyadic square function without dyadic {$A_\infty $}},
  journal={Duke Math. J.},
  fjournal={Duke Mathematibb Journal},
  volume={55},
  year={1987},
  number={1},
  pages={19--50},
  issn={0012-7094},
  coden={DUMJAO},
  mrclass={42B25},
  mrnumber={883661 (88d:42034)},
  mrreviewer={B. Muckenhoupt},
  %doi={10.1215/S0012-7094-87-05502-5},
  url={http://dx.doi.org/10.1215/S0012-7094-87-05502-5},
}

\bib{MR2680056}{article}{
   author={Vagharshakyan, Armen},
   title={Recovering singular integrals from Haar shifts},
   journal={Proc. Amer. Math. Soc.},
   volume={138},
   date={2010},
   number={12},
   pages={4303--4309},
   issn={0002-9939},
   review={\MR{2680056}},
  %doi={10.1090/S0002-9939-2010-10426-4},
}

\end{biblist}
\end{bibsection}

\end{document}